\providecommand{\tabularnewline}{\\}
\theoremstyle{plain}
\newtheorem{assumption}{\protect\assumptionname}
\theoremstyle{plain}
\newtheorem{thm}{\protect\theoremname}
\theoremstyle{plain}
\newtheorem{lem}{\protect\lemmaname}
\newenvironment{lyxlist}[1]
	{\begin{list}{}
		{\settowidth{\labelwidth}{#1}
		 \setlength{\leftmargin}{\labelwidth}
		 \addtolength{\leftmargin}{\labelsep}
		 }}
	{\end{list}}
\theoremstyle{plain}
\newtheorem{prop}{\protect\propositionname}
\providecommand{\assumptionname}{Assumption}
\providecommand{\lemmaname}{Lemma}
\providecommand{\propositionname}{Proposition}
\providecommand{\theoremname}{Theorem}
\begin{document}
\title{Variable selection via thresholding}

\author[1]{Ka Long Keith Ho}
\author[1,2]{Hien Duy Nguyen}
\affil[1]{Joint Graduate School of Mathematics for Innovation, Kyushu University, Fukuoka, Japan.}
\affil[2]{Department of Mathematics and Physical Science, La Trobe University, Melbourne, Australia.}

\maketitle
\begin{abstract}
Variable selection comprises an important step in many modern statistical
inference procedures. In the regression setting, when estimators cannot
shrink irrelevant signals to zero, covariates without relationships
to the response often manifest small but non-zero regression coefficients.
The ad hoc procedure of discarding variables whose coefficients are
smaller than some threshold is often employed in practice. We formally
analyze a version of such thresholding procedures and develop a simple
thresholding method that consistently estimates the set of relevant
variables under mild regularity assumptions. Using this thresholding
procedure, we propose a sparse, $\sqrt{n}$-consistent and asymptotically normal estimator whose non-zero elements do not exhibit shrinkage. The performance
and applicability of our approach are examined via numerical studies
of simulated and real data.
\end{abstract}

\textbf{Keywords:} Hard thresholding; Variable selection consistency; Sparse estimation; Asymptotic normality; Regression analysis.

\section{Introduction}
Regression analysis is a fundamental tool of statistical inference.
In the modern context, the successful conduct of regression analysis
typically requires a solution to the variable selection problem, which
has been described and characterized in the works of \citet{george2000variable},
\citet{fan2010selective}, and \citet{heinze2018variable}, among
many others. 

Variable selection arises as a necessary task in all regression settings, including generalized linear models (GLMs), nonlinear regression, mixed-effects models, functional models, and so forth. However, due to the breadth of the subject, we will restrict our exposition to the linear regression setting, although we note that our methodology can be extended with minor modifications to most regression modeling tasks.

Let
\[
\bm{Y}=\mathbf{X}\bm{\beta}_{0}+\bm{\epsilon}\text{,}
\]
where $\bm{Y}=\left(Y_{1},\dots,Y_{n}\right)^{\top},\bm{\epsilon}=\left(\epsilon_{1},\dots,\epsilon_{n}\right)^{\top}\in\mathbb{R}^{n}$
are the response and error vectors, and $\mathbf{X}=\left(\bm{X}_{1},\dots,\bm{X}_{n}\right)^{\top}\in\mathbb{R}^{n\times p}$
is a design matrix, for $n,p\in\mathbb{N}$. Here, $\bm{\beta}_{0}=\left(\beta_{0,1},\dots,\beta_{0,p}\right)^{\top}\in\mathbb{R}^{p}$
is the vector of true signals which are unknown to the analyst, who
only observes $\mathbf{X}$ and $\bm{Y}$. We shall further restrict
ourselves to the random design scenario, where $\bm{X}_{i}$ and $\epsilon_{i}$
are random quantities that are independent and identically distributed
(IID) replicates of $\bm{X}$ and $\epsilon$, respectively, for each
$i\in\left\{ 1,\dots,n\right\} =\left[n\right]$. We denote the common
probability space on which all of our random objects are supported
on $\left(\Omega,{\cal F},\mathbb{P}\right)$ with expectation operator
$\mathbb{E}$.

We assume that $p$ is large but finite and that $\bm{\beta}_{0}$
is sparse in the sense that $\left|{\cal S}_{0}\right|\gg0$, where
\[
{\cal S}_{0}=\left\{ j\in\left[p\right]:\beta_{0,j}=0\right\} 
\]
is the set of irrelevant coordinates. The problem of variable selection
under sparsity can be rephrased as the problem of estimating the set
${\cal S}_{0}$. Traditional methods for addressing this problem,
such as step-wise and best subset selection methods (see e.g., \citealp[Sec. 3.3]{hastie2009elements}),
can be computationally prohibitive and slow, which has led to the large volume of research in sparse shrinkage estimators, such as the LASSO, bridge, and elastic net estimators, stemming from the works
of \citet{tibshirani1996regression}, \citet{fu1998penalized}, \citet{fan2001variable},
and \citet{zou2005regularization}, among other early pioneers. Detailed
accounts of the shrinkage estimation literature can be found in the
texts of \citet{buhlmann2011statistics}, \citet{hastie2015statistical},
\citet{rish2014sparse}, and \citet{van-de-Geer:2016aa}.

Typically, a shrinkage estimator $\hat{\bm{\beta}}_{n}=\left(\hat{\beta}_{n,1},\dots,\hat{\beta}_{n,p}\right)^{\top}$
will be such that the set of coordinates that are estimated to be
irrelevant,
\[
\hat{{\cal S}}_{0,n}=\left\{ j\in\left[p\right]:\hat{\beta}_{n,j}=0\right\} 
\]
is non-empty; i.e., $\left|\hat{{\cal S}}_{0,n}\right|>0$. However,
due to numerical implementations, it often occurs that the computed
elements of $\hat{\bm{\beta}}_{n}$ will yield small but non-zero
signals. This is the case, for example, when implementing the majorization--minimization
algorithms for sparse penalties, such as in \citet{Hunter:2005aa},
\citet{lloyd2018globally}, and \citet{abergel2024review}, or when
using the ``merit function'' penalties of \citet[Ch. 7]{zhao2018sparse}.
Furthermore, many shrinkage methods, such as the ridge estimator \citep{hoerl1970ridge}
and its adaptive variants \citep{frommlet2016adaptive,dai2018broken,abergel2024review}
 output elements of $\hat{\bm \beta}_{n}$ that are sufficiently
small as to be considered negligible. 

As per the references, an ad-hoc approach to handling such small estimates
is to simply declare some small threshold $\delta>0$ for which all
estimated signals with absolute size smaller than $\delta$ are placed
in the set of irrelevant coordinates. However, the question arises
as to how small should the threshold $\delta$ be chosen. Practically,
one may try a set of candidate thresholds and make a choice based
on some empirical criterion.

In the following text, we shall consider a formalization of the described thresholding scheme whereupon one estimates the set of irrelevant signals ${\cal S}_{0}$ by the set $\hat{{\cal S}}_{n}$ of coordinates corresponding to elements of the estimated signal $\hat{\bm{\beta}}_{n}$ whose size is less than some choice of a small threshold. That is, we produce an algorithm that takes some estimator $\hat{\bm{\beta}}_{n}$
of $\bm{\beta}_{0}$ as an input along with a sequence of threshold
values $\delta_{1}>\delta_{2}>\dots>\delta_{K}>0$, for some $K\in\mathbb{N}$.
The algorithm then outputs an estimate $\hat{{\cal S}}_{n}$ of the
set of irrelevant coordinates ${\cal S}_{0}$, by eliminating all
signals smaller than $\delta_{\hat{K}_n}$, where $\hat{K}_{n}\in\left[K\right]$
is chosen using an information criterion dependent on the sequence
of threshold values along with their corresponding thresholded empirical
risks. Under some regularity assumptions regarding the moments of
$\bm{X}$ and $\epsilon$, and the minimum threshold size $\delta_{K}$, we show that
our algorithm is consistent in the sense that 
\begin{equation}
\lim_{n\to\infty}\mathbb{P}\left(\hat{{\cal S}}_{n}=\mathcal{S}_{0}\right)=1\text{.}\label{eq:=000020consistent=000020S}
\end{equation}
To accompany our selection result, we also propose an estimator $\bar{\bm{\beta}}_n$, whose $j$th coordinate is zero, if $j\in\hat{\cal S}_n$, and equal to the $j$th coordinate of the initial estimator of the regression model $\hat{\bm\beta}_n$, otherwise (the so-called ``hard threshold'' estimator). Via the consistency of $\hat{\cal S}_n$, if the $\sqrt{n}$-blowup of $\hat{\bm\beta}_n$ converges in distribution, then we can show that $\bar{\bm{\beta}}_n$ satisfies an oracle result. Namely, this estimator is $\sqrt{n}$-consistent in its relevant coordinates, and the irrelevant signals are equal to zero with probability approaching one. Furthermore, if $\hat{\bm\beta}_n$ is asymptotically normal, then the relevant coordinates of $\bar{\bm{\beta}}_n$ are asymptotically normal as well.

Our technical results follow from recent work regarding the asymptotic
distribution of the minimum value of an empirical risk minimization
program by \citet{westerhout2024asymptotic} along with connections
between such asymptotics and information criteria, reported in \citet{nguyen2024panic}.
The proofs are further made possible using the random empirical processes
theory of \citet{van2007empirical}, as presented in \citet{van2023weak}.
Along with our theory, we also provide some illustrative numerical
studies to demonstrate our methodology.

We note that our approaches are different and complementary to those of previous works that seek to use thresholding for variable selection and screening. For instance, \cite{zhou2009thresholding}, \cite{van2011adaptive}, \cite{slawski2013non}, \cite{pokarowski2015combined}, \cite{zheng2014high}, and \cite{sun2019hard} consider thresholding already regularized estimators, in the high-dimensional setting. The regularized estimators are required to be well-behaved, via enforcement of strong moment and tail assumptions on the error $\epsilon$  and covariate $\bm{X}$ and eigenvalue assumptions on the Grammian, in order to establish variable selection consistency via finite sample bounds. In contrast, we take a wholly asymptotic approach, which provides qualitative instead of quantitative guarantees under  weaker and different assumptions. Furthermore, our asymptotic normality result is obtained under different demands than those available in the literature. For instance, with normal noise and diminishing thresholds, \cite{potscher2009distribution}, \cite{potscher2011distributional} and \cite{schneider2016confidence} obtain marginal asymptotic normality results for the hard-thresholded least squares estimators. This contrasts with our results, which guarantee multivariate asymptotic normality without making assumptions on the noise distribution and for thresholds strictly bounded away from zero.

The remainder of the manuscript proceeds as follows. In Section 2,
we describe the construction of our thresholding algorithm. Our main
theoretical results are presented in Section 3, along with required
technical preliminaries. Numerical studies are conducted in Section
4 and concluding remarks are provided in Section 5. Further proofs
and auxiliary results are provided in the Appendix.

\section{The thresholding algorithm}

We retain notation from the introduction, and for each $k\in\left[K\right]$,
we define a thresholding function $t_{k}:\mathbb{R}\to\left[0,1\right]$,
which serves as a smooth approximation to the step function threshold
at $\delta_{k}$:
\begin{equation}
\text{step}\left(b;\delta_{k}\right)=\begin{cases}
0 & \text{if }\left|b\right|\le\delta_{k}\text{,}\\
1 & \text{if }\left|b\right|>\delta_{k}\text{.}
\end{cases}\label{eq:=000020Step=000020function}
\end{equation}
We shall elaborate on the required properties of these functions in
the sequel.

Given our estimate $\hat{\bm{\beta}}_{n}$ of $\bm{\beta}_{0}$, we
can apply the $k$th thresholding transformation
\begin{equation}
\bm{T}_{k}\left(\bm{\beta};\hat{\bm{\beta}}_{n}\right)=\left(\beta_{1}t_{k}\left(\hat{\beta}_{n,1}\right),\dots,\beta_{p}t_{k}\left(\hat{\beta}_{n,p}\right)\right)^{\top}\text{,}\label{eq:=000020Thresholding=000020transformation}
\end{equation}
which intuitively sets elements of $\bm{\beta}$ whose corresponding
estimated signals are smaller than $\delta_{k}$ to zero. For brevity,
we will sometimes write (\ref{eq:=000020Thresholding=000020transformation})
as $\bm{T}_{n,k}\left(\bm{\beta}\right)$. Furthermore, with the notation
\[
\mathbf{D}_{k}\left(\bm{\beta}\right)=\text{diag}\left(t_{k}\left(\beta_{1}\right),\dots,t_{k}\left(\beta_{p}\right)\right)\text{,}
\]
the diagonal matrix whose diagonal comprises the elements of $t_{k}\left(\beta_{1}\right),\dots,t_{k}\left(\beta_{p}\right)$,
we further have the expression 
\[
\bm{T}_{n,k}\left(\bm{\beta}\right)=\mathbf{D}_{k}\left(\hat{\bm{\beta}}_{n}\right)\bm{\beta}\text{.}
\]

For each $k\in\left[K\right]$, we can now define the thresholded
empirical risk 
\[
R_{n,k}\left(\bm{\beta}\right)=\frac{1}{n}\sum_{i=1}^{n}\left\{ Y_{i}-\bm{X}_{i}^{\top}\bm{T}_{n,k}\left(\bm{\beta}\right)\right\} ^{2}\text{,}
\]
where $R_{n,k}:\mathbb{R}^{p}\to\mathbb{R}$ can be understood as
a random empirical process in the sense of \citet[Sec. 3.13]{van2023weak},
since it depends not only on the averaging process, but also on the
random vector $\hat{\bm{\beta}}_{n}$. If $\hat{\bm{\beta}}_{n}$
is a consistent estimator of $\bm{\beta}_{0}$, then we may anticipate
that $R_{n,k}$ will converge to some limiting risk functional $r_{k}$,
defined for each $\bm{\beta}\in\mathbb{R}^{p}$ by
\[
r_{k}\left(\bm{\beta}\right)=\mathbb{E}\left[\left\{ Y-\bm{X}^{\top}\bm{T}_{k}\left(\bm{\beta},\bm{\beta}_{0}\right)\right\} ^{2}\right]\text{,}
\]
whose minimum value we can write as
\[
\psi_{k}=\min_{\bm{\beta}\in\mathbb{R}^{p}}\ r_{k}\left(\bm{\beta}\right)\text{.}
\]
We note that this minimum and its empirical variant exist because
both $R_{n,k}$ and $r_{k}$ are quadratic, for each $k$ and $n$.
This is easier to see when writing:
\[
R_{n,k}\left(\bm{\beta}\right)=\frac{1}{n}\sum_{i=1}^{n}\left\{ Y_{i}-\bm{X}_{i}^{\top}\mathbf{D}_{k}\left(\hat{\bm{\beta}}_{n}\right)\bm{\beta}\right\} ^{2}\text{ and }r_{k}\left(\bm{\beta}\right)=\mathbb{E}\left[\left\{ Y-\bm{X}^{\top}\mathbf{D}_{k}\left(\beta_{0}\right)\bm{\beta}\right\} ^{2}\right]\text{.}
\]

Intuitively, the $j$th coordinate of $\bm{T}_{k}\left(\bm{\beta};\bm{\beta}_{0}\right)$
is $0$, when $\left|\beta_{0,j}\right| \leq \delta_{k}$ and $\bm{\beta}_{j}$,
otherwise. Thus, the minimum risk $\psi_{k}$ is exactly the expected
least squares value from fitting the linear regression model which
excludes all variables in the $k$th irrelevant set
\[
{\cal S}_{k}=\left\{ j\in\left[p\right]:\left|\beta_{0,j}\right|\le\delta_{k}\right\} \text{.}
\]
If ${\cal S}_{k}\subset{\cal S}_{0}$, then only truly irrelevant
coordinates are excluded when evaluating $r_{k}$, and thus $\psi_{k}$
is equal to
\[
\psi_{0}=\min_{\bm{\beta}\in\mathbb{R}^{p}}\frac{1}{n}\left\Vert \bm{Y}-\mathbf{X}^{\top}\bm{\beta}\right\Vert ^{2}=\frac{1}{n}\left\Vert \bm{Y}-\mathbf{X}^{\top}\bm{\beta}_{0}\right\Vert ^{2}\text{,}
\]
the mean squared error under the true signal $\bm{\beta}_{0}$. However,
if ${\cal S}_{0}\subset{\cal S}_{k}$, then some relevant coordinates
that are not in ${\cal S}_{0}$ may be excluded when computing $r_{k}$
and thus $\psi_{0}\le\psi_{k}$. Observe further that since $\delta_{1}>\dots>\delta_{K}$
is decreasing, we have ${\cal S}_{1}\supset\dots\supset{\cal S}_{K}$,
and as per the argument above, it holds that $\psi_{1}\ge\dots\ge\psi_{K}\ge\psi_{0}$.
The following assumption is taken to guarantee that ${\cal S}_{K}={\cal S}_{0}$,
and thus $\psi_{K}=\psi_{0}$.
\begin{assumption}
\label{assu:delta=000020beta=000020relationships}For each $j\in\left[p\right]$,
\[
\min_{j\notin{\cal S}_{0}}\left|\beta_{0,j}\right|>\delta_{K}\text{.}
\]
\end{assumption}
The assumption states that our choice of thresholds can distinguish between
no signal and the minimum signal. In the sparse regression literature,
this corresponds to the common beta-min condition, which bounds the
smallest signal away from zero (cf. \citealp{buhlmann2011statistics}). 

We can propose the ``variable selection'' problem of determining
\[
k_{0}=\min\left\{ \underset{k\in\left[K\right]}{\arg\min}\ \psi_{k}\right\} \text{.}
\]
That is, we wish to choose the largest threshold $\delta_{k_{0}}\in\left\{ \delta_{k}:k\in\left[K\right]\right\} $
for which ${\cal S}_{0}\subset{\cal S}_{k_{0}}$, thus yielding a
model that excludes the largest possible number of irrelevant variables
whilst achieving the minimum limiting risk value. 

To estimate $k_{0}$, we construct the information criterion-like
estimator 
\[
\hat{K}_{n}=\min\left\{ \underset{k\in\left[K\right]}{\arg\min}\ \left(\hat{\psi}_{n,k}+P_{n,k}\right)\right\} \text{.}
\]
Here, for each $k\in\left[K\right]$,
\[
\hat{\psi}_{n,k}=\min_{\bm{\beta}\in\mathbb{R}^{p}}\ R_{n,k}\left(\bm{\beta};\hat{\bm{\beta}}_{n}\right)
\]
is the minimum empirical risk under the $k$th thresholding transformation
and estimator of $\psi_{k}$, and $P_{n,k}:\Omega\times\Theta\to\mathbb{R}$
is its corresponding penalty function, which can be chosen so that
\begin{equation}
\lim_{n\to\infty}\mathbb{P}\left(\hat{K}_{n}=k_{0}\right)=1\text{.}\label{eq:=000020consistent=000020k*}
\end{equation}
For each $k$ and $n$, we can estimate ${\cal S}_{k}$ by
\[
{\cal S}_{k,n}=\left\{ j\in\left[p\right]:\left|\hat{\beta}_{n,j}\right|\leq \delta_{k}\right\} \text{.}
\]
With this notation, our information criterion then yields the estimate
$
\hat{{\cal S}}_{n}={\cal S}_{\hat{K}_{n},n}
$
of ${\cal S}_{0}$ which constitutes the output of our thresholding
algorithm and our primary object of interest. 

It follows from (\ref{eq:=000020consistent=000020k*}) that $\hat{{\cal S}}_{n}$
satisfies the variable screening property \citet[Sec. 2.7]{buhlmann2011statistics}:
\[
\lim_{n\to\infty}\mathbb{P}\left(\hat{{\cal S}}_{n}\subset{\cal S}_{0}\right)=1\text{.}
\]
In the sequel, we will demonstrate that $\hat{{\cal S}}_{n}$ is moreover
consistent in the sense of (\ref{eq:=000020consistent=000020S}).

\subsection{Thresholding functions}

Naively, thresholding of irrelevant coordinates from the signal vector
is conducted using the step function (\ref{eq:=000020Step=000020function}),
as often applied in practice. However, to prove the necessary limit
theorems, we require that the functions $t_{k}$ are sufficiently
smooth as per the following assumption.
\begin{assumption}
\label{assu:=000020thresholds}For each $k\in\left[K\right]$, the
thresholding function $t_{k}:\mathbb{R}\to\left[0,1\right]$ is twice
continuously differentiable and has the form $t_{k}=\tau_{k}\circ\left|\cdot\right|$,
where $\tau_{k}:\mathbb{R}_{\ge0}\to\left[0,1\right]$ is an increasing
function, such that $\tau_{k}\left(0\right)=0$ and for each $b>0$,
$\tau_{k}\left(b\right)>0$.
\end{assumption}
For any $h > 0 $,
define the cubic splines
\[
\tau_{k}\left(b\right)=\begin{cases}
0 & \text{if }b\le\delta_{k}\text{,}\\
\frac{4}{h^{3}}\left(x-\delta_{k}\right)^{3} & \text{if }\delta_{k}<b\le\delta_{k}+\frac{h}{2}\text{,}\\
\frac{4}{h^{3}}\left(x-\delta_{k}-h\right)^{3}+1 & \text{if }\delta_{k}+\frac{h}{2}<\delta_{k}+h\text{,}\\
1 & \text{if }b\ge\delta_{k}+h\text{.}
\end{cases}
\]
We can check that $t_{k}=\tau_{k}\circ\left|\cdot\right|$ satisfy
Assumption \ref{assu:=000020thresholds}, as required. For the remainder
of the manuscript, we will use this spline construction as our choice
for thresholding. However, as will be apparent in the sequel, the thresholding function serves as a purely technical device, because in practice, the thresholding procedure can be implemented directly with the step function. We shall elaborate on this point in Section \ref{subsec:Ordinary-least-squares=000020simulation}.

\subsection{Penalty functions}

We will require that the penalty functions satisfy the following assumption.
\begin{assumption}
\label{assu:=000020Penalty=000020shape}For each pair $k,l\in\left[K\right]$
, $P_{k,n}>0$ for all sufficiently large $n$, $P_{k,n}=o_{\mathbb{P}}\left(1\right)$,
and if $l>k$, then $\sqrt{n}\left(P_{l,n}-P_{k,n}\right)\overset{\mathbb{P}}{\longrightarrow}\infty$.
\end{assumption}
Let $\alpha:\mathbb{R}_{\ge0}\to\mathbb{R}_{\ge0}$ be a decreasing
function. Then, we can satisfy Assumption \ref{assu:=000020Penalty=000020shape}
by taking
\begin{equation}
P_{n,k}=\alpha\left(\delta_{k}\right)\frac{\log n}{\sqrt{n}}\text{.}\label{eq:=000020generic=000020pen}
\end{equation}
Penalties $P_{n,k}=O\left(n^{-1/2}\log n\right)$ were proposed in
\citet{sin1996information} and adopted in \citet{nguyen2024panic} and
\citet{westerhout2024asymptotic}, where they are referred to as Sin--White
information criteria (SWIC). Notice that SWIC differ from the more
commonly used BIC-type (Bayesian information criteria; \citealp{schwarz1978estimating})
penalties, which have the form $P_{n,k}=O\left(n^{-1}\log n\right)$.
Although BIC penalties provide milder penalization of the model complexity, they approach zero too quickly to distinguish between
the cases where ${\cal S}_{k}\subset{\cal S}_{0}$, unless stronger assumptions are made. However, as long as $P_{k,n}=o_{\mathbb{P}}\left(1\right)$,
which is satisfied by the BIC and AIC-type (Akaike information criterion;
\citealp{akaike1974new}), we are guaranteed at least to have the variable screening
property for $\hat{{\cal S}}_{n}$.

We note that there is some room to experiment with the functional
form of $\alpha$, which we will investigate in our numerical studies.
However, a principled approach can be taken, such as via some analogy of the ``slope
heuristic'' of \citet{birge2007minimal}, as demonstrated in \citet{baudry2012slope}.

\section{Theoretical results}

The first step of our analysis is to establish that $\sqrt{n}\left(\hat{\psi}_{n,k}-\psi_{k}\right)$
converges weakly, for each $k\in\left[K\right]$. To facilitate our
technical exposition, we require the following notation. Let $\Theta\subset\mathbb{R}^{p}$
be a compact subset, which we can choose to be arbitrarily large as
to cause no practical restriction. For each $k\in\left[K\right]$
, $i\in\left[n\right]$, and $\bm{\beta},\bm{\beta}^{\prime}\in\Theta$, we
denote the individual level losses by
\[
l_{\bm{\beta},\bm{\beta}^{\prime}}\left(\bm{X}_{i},Y_{i}\right)=\left\{ Y_{i}-\bm{X}_{i}^{\top}\bm{T}_{k}\left(\bm{\beta};\bm{\beta}^{\prime}\right)\right\} ^{2}\text{,}
\]
where $l_{\bm{\beta},\bm{\beta}^{\prime}}\left(\bm{X}_{i},Y_{i}\right)$
has the same law as $l_{\bm{\beta},\bm{\beta}^{\prime}}\left(\bm{X},Y\right)$.
We then define the empirical and expected thresholded risk functions by
\[
G_{n,k}\left(\bm{\beta};\bm{\beta}^{\prime}\right)=\frac{1}{n}\sum_{i=1}^{n}l_{\bm{\beta},\bm{\beta}^{\prime}}\left(\bm{X}_{i},Y_{i}\right)\text{ and }g_{k}\left(\bm{\beta};\bm{\beta}^{\prime}\right)=\mathbb{E}\left[l_{\bm{\beta},\bm{\beta}^{\prime}}\left(\bm{X},Y\right)\right]\text{,}
\]
which allows us to write
\[
R_{n,k}=G_{n,k}\left(\cdot;\hat{\bm{\beta}}_{n}\right)\text{ and }r_{k}=g_{k}\left(\cdot;\bm{\beta}_{0}\right)\text{.}
\]
Since our analysis is the same for each fixed $k$, we will also find
it convenient to remove the index and write $G_{n}$ and $g$ in place
of $G_{n,k}$ and $g_{k}$, and $R_{n}$ and $r$ in place of $R_{n,k}$
and $r$, respectively, for example.

Consider that the thresholding function $\bm{T}\left(\bm{\beta};\bm{\beta}_{0}\right)=\mathbf{D}\left(\bm{\beta}_{0}\right)\bm{\beta}$
partitions the coordinates $j\in\left[p\right]$ into the set of irrelevant
coordinates ${\cal S}$ and its complement, the set of relevant coordinates
${\cal S}^{\text{c}}$. Let us write the parameter $\bm{\beta}$ and
design vectors restricted to the relevant coordinates as $\bm{\beta}_{\text{R}}$
and $\bm{X}_{\text{R}}$, respectively, along with the relevant thresholded
expected risk
\[
r_{\text{R}}\left(\bm{\beta}_{\text{R}}\right)=\mathbb{E}\left[\left\{ Y-\mathbf{X}_{R}\mathbf{D}_{\text{R}}\left(\bm{\beta}_{0}\right)\bm{\beta}_{\text{R}}\right\} ^{2}\right]\text{,}
\]
where $\mathbf{D}_{\text{R}}\left(\bm{\beta}_{0}\right)$ is the same
as $\mathbf{D}\left(\bm{\beta}_{0}\right)$ with empty rows and columns
removed.

Observe that we can consider $R_{n}:\Theta\to\mathbb{R}$ and $r:\Theta\to\mathbb{R}$
to be elements of the set $\ell^{\infty}\left(\Theta\right)$ of bounded
functionals supported on $\Theta$ (cf. \citealp[Sec. 1.5]{van2023weak}).
As such, we start by showing that $\sqrt{n}\left(R_{n}-r\right)$
converges weakly in $\ell^{\infty}\left(\Theta\right)$, as $n\to\infty$.
Denoting weak convergence by $\rightsquigarrow$, we make the following
assumptions.
\begin{assumption}
\label{assu:fourth=000020moments}The variables $\bm{X}$ and $\epsilon$
have finite fourth moments: i.e., $\mathbb{E}\left[\left\Vert \bm{X}\right\Vert ^{4}\right]<\infty$
and $\mathbb{E}\left[\epsilon^{4}\right]<\infty$.
\end{assumption}
\begin{assumption}
\label{assu:asymptotic=000020beta}The estimator $\hat{\bm{\beta}}_{n}$
converges in distribution to $\bm{\beta}_{0}$: i.e., 
\[
\sqrt{n}\left(\hat{\bm{\beta}}_{n}-\bm{\beta}_{0}\right)\rightsquigarrow\bm{Z}\text{,}
\]
for some random vector $\bm{Z}:\Omega\to\mathbb{R}^{p}$.
\end{assumption}
Assumption \ref{assu:fourth=000020moments} is a moment condition required
for our empirical process weak convergence results, and Assumption
\ref{assu:asymptotic=000020beta} is satisfied, for example, whenever
$\hat{\bm{\beta}}_{n}$ is an asymptotically normal estimator, as
in the case of the ordinary least squares (OLS) estimator:
\begin{equation}
\hat{\bm{\beta}}_{n}=\left(\mathbf{X}^{\top}\mathbf{X}\right)^{-1}\mathbf{X}^{\top}\bm{Y}\text{.}\label{eq:=000020OLS}
\end{equation}

Let $\bm{\beta}\mapsto\nabla_{2}g\left(\bm{\beta};\bm{\beta}_0\right)$ be the gradient
of $g\left(\bm{\beta};\bm{\beta}^{\prime}\right)$ in $\bm{\beta}^{\prime}$, evaluated at $\bm{\beta}^{\prime}=\bm{\beta}_{0}$,
as a function of the $\bm{\beta}$. The following theorem is technical
but forms the foundation of our subsequent analysis.
\begin{thm}
\label{thm:random=000020empirical=000020process=000020weak}Under
Assumptions \ref{assu:=000020thresholds}, \ref{assu:fourth=000020moments}
and \ref{assu:asymptotic=000020beta}, 
\[
\sqrt{n}\left(R_{n}-r\right)\rightsquigarrow R+\bm{Z}^{\top}\nabla_{2}g(\cdot,\bm{\beta}_0)\text{,}
\]
where $R:\Omega\to\ell^{\infty}\left(\Theta\right)$ is a Gaussian
process.
\end{thm}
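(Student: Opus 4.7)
The approach is to add and subtract $g(\cdot; \hat{\bm{\beta}}_n)$ and split
\[
\sqrt{n}(R_n - r) = \sqrt{n}\bigl\{G_n(\cdot; \hat{\bm{\beta}}_n) - g(\cdot; \hat{\bm{\beta}}_n)\bigr\} + \sqrt{n}\bigl\{g(\cdot; \hat{\bm{\beta}}_n) - g(\cdot; \bm{\beta}_0)\bigr\},
\]
handling the first summand with empirical process theory and the second by a Taylor expansion, then combining via joint weak convergence. For the first summand I would establish that $\mathcal{F} = \{l_{\bm{\beta}, \bm{\beta}'}: (\bm{\beta}, \bm{\beta}') \in \Theta \times \Theta\}$ is $\mathbb{P}$-Donsker. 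Assumption \ref{assu:=000020thresholds} makes $(\bm{\beta}, \bm{\beta}') \mapsto l_{\bm{\beta}, \bm{\beta}'}(\bm{x}, y)$ Lipschitz on the compact set $\Theta \times \Theta$ with Lipschitz constant bounded by a polynomial in $\|\bm{x}\|$ and $|\epsilon|$, which Assumption \ref{assu:fourth=000020moments} renders square integrable; standard bracketing bounds for parametric Lipschitz classes then deliver the Donsker property and convergence $\sqrt{n}(G_n - g) \rightsquigarrow \mathbb{G}$ in $\ell^\infty(\Theta \times \Theta)$ to a tight, centered Gaussian process. Since Assumption \ref{assu:asymptotic=000020beta} yields $\hat{\bm{\beta}}_n \overset{\mathbb{P}}{\longrightarrow} \bm{\beta}_0$, the uniform continuity of sample paths of $\mathbb{G}$ together with the asymptotic equicontinuity built into the Donsker property (the random empirical process argument of \citet{van2007empirical}) shows that $\sqrt{n}\{G_n(\cdot; \hat{\bm{\beta}}_n) - g(\cdot; \hat{\bm{\beta}}_n)\}$ and $\sqrt{n}\{G_n(\cdot; \bm{\beta}_0) - g(\cdot; \bm{\beta}_0)\}$ share the weak limit $R := \mathbb{G}(\cdot, \bm{\beta}_0)$ in $\ell^\infty(\Theta)$.

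For the second summand, twice continuous differentiability of $t_k$ (Assumption \ref{assu:=000020thresholds}) and the fourth moment condition (Assumption \ref{assu:fourth=000020moments}) together imply that $\bm{\beta}' \mapsto g(\bm{\beta}; \bm{\beta}')$ is $C^2$ with Hessian bounded in operator norm uniformly over $\bm{\beta} \in \Theta$. A second-order Taylor expansion of $\bm{\beta}' \mapsto g(\bm{\beta}; \bm{\beta}')$ about $\bm{\beta}_0$ yields, uniformly in $\bm{\beta} \in \Theta$,
\[
g(\bm{\beta}; \hat{\bm{\beta}}_n) - g(\bm{\beta}; \bm{\beta}_0) = (\hat{\bm{\beta}}_n - \bm{\beta}_0)^\top \nabla_2 g(\bm{\beta}; \bm{\beta}_0) + O\bigl(\|\hat{\bm{\beta}}_n - \bm{\beta}_0\|^2\bigr).
\]
Assumption \ref{assu:asymptotic=000020beta} gives $\|\hat{\bm{\beta}}_n - \bm{\beta}_0\| = O_{\mathbb{P}}(n^{-1/2})$, so after multiplying by $\sqrt{n}$ the remainder is $o_{\mathbb{P}}(1)$ in $\ell^\infty(\Theta)$. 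Since $\bm{\beta} \mapsto \nabla_2 g(\bm{\beta}; \bm{\beta}_0)$ is continuous on the compact $\Theta$, the map $\bm{z} \mapsto \bm{z}^\top \nabla_2 g(\cdot; \bm{\beta}_0)$ is continuous from $\mathbb{R}^p$ into $\ell^\infty(\Theta)$, and the continuous mapping theorem combined with Assumption \ref{assu:asymptotic=000020beta} delivers
\[
\sqrt{n}\bigl\{g(\cdot; \hat{\bm{\beta}}_n) - g(\cdot; \bm{\beta}_0)\bigr\} \rightsquigarrow \bm{Z}^\top \nabla_2 g(\cdot; \bm{\beta}_0).
\]

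To assemble the pieces I would establish joint weak convergence of the pair $(\sqrt{n}(G_n - g), \sqrt{n}(\hat{\bm{\beta}}_n - \bm{\beta}_0))$ in $\ell^\infty(\Theta \times \Theta) \times \mathbb{R}^p$ on a common probability space (the limits will in general be correlated, e.g.\ when $\hat{\bm{\beta}}_n$ admits a linear influence representation such as OLS), and then apply the continuous mapping theorem to the addition map to obtain the stated limit $R + \bm{Z}^\top \nabla_2 g(\cdot; \bm{\beta}_0)$. The main obstacle I anticipate is the combination of verifying the Donsker property for $\mathcal{F}$ and executing the random-parameter replacement: this requires uniform stochastic equicontinuity of the empirical process in its second argument, which in turn forces careful moment bookkeeping through Assumptions \ref{assu:=000020thresholds} and \ref{assu:fourth=000020moments} to control the bracketing entropy integral. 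Everything else is a routine application of the Taylor expansion and the continuous mapping theorem.
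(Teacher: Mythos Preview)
Your proposal is correct and follows essentially the same route as the paper: the paper uses the three-term decomposition $(1\text{A})+(1\text{B})+(1\text{C})$ where $(1\text{A})$ is shown to be $o_{\mathbb P}(1)$ via \citet[Thm.~3.13.4]{van2023weak} (which is exactly your random-parameter replacement argument), $(1\text{B})\rightsquigarrow R$ via the Donsker property of $\mathcal F$ established through the same Lipschitz-in-parameter bracketing you outline, and $(1\text{C})$ via the functional delta method, which the paper verifies by a Taylor expansion equivalent to yours. The only cosmetic difference is that you fold $(1\text{A})$ and $(1\text{B})$ into a single step and treat $(1\text{C})$ by a direct second-order Taylor bound rather than invoking Hadamard differentiability, but the substance and the moment bookkeeping under Assumptions~\ref{assu:=000020thresholds} and~\ref{assu:fourth=000020moments} are identical.
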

The proof of Theorem \ref{thm:random=000020empirical=000020process=000020weak}
follows from the theory of random empirical processes and is provided
in Appendix \ref{sec:Proof=000020of=000020Theorem=0000201}. Intuitively,
the theorem states that for each $k$, when centered by the limiting
risk and thresholded by the true signal $\bm{\beta}_{0}$, the empirical thresholded
least squares converges to a random process in $\ell^{\infty}\left(\Theta\right)$.

Let $\iota:\ell^{\infty}\left(\Theta\right)\to\mathbb{R}$ denote
the infimum function, whereby for each $h:\Theta\to\mathbb{R}$,
\[
\iota\left(h\right)=\inf_{\bm{\beta}\in\Theta}h\left(\bm{\beta}\right)\text{.}
\]
With this notation, we can also define the $\varepsilon$-minimizers
of $h$ as the set
\[
{\cal B}_{\varepsilon}\left(h\right)=\left\{ \bm{\beta}\in\Theta:h\left(\bm{\beta}\right)\le\iota\left(h\right)+\varepsilon\right\} \text{.}
\]
\citet[Thm. 3.1]{westerhout2024asymptotic} then states that the directional
derivative of $\iota$ at $h$ in the direction of $\eta\in\ell^{\infty}\left(\Theta\right)$
is
\[
\dot{\iota}_{h}\left(\eta\right)=\lim_{\varepsilon\searrow0}\inf_{\bm{\beta}\in{\cal B}_{\varepsilon}\left(h\right)}\eta\left(\bm{\beta}\right)\text{.}
\]
We can apply the directional functional delta method of \citet{Romisch:2014aa}
(see also \citealp[Fact 3.2]{westerhout2024asymptotic}) to obtain
the following result.
\begin{lem}
\label{lem:weak=000020convergence=000020min}Under the assumptions
of Theorem \ref{thm:random=000020empirical=000020process=000020weak},
we can choose a compact $\Theta\subset\mathbb{R}^{p}$ such that 
\[
\sqrt{n}\left(\iota\left(R_{n}\right)-\psi\right)\rightsquigarrow F\text{,}
\]
where $F=\dot{\iota}_{r}\left(R+\bm{Z}^{\top}\nabla_{2}g\right)$
is a random variable.
\end{lem}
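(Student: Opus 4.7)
The plan is to apply the directional functional delta method of \citet{Romisch:2014aa} to the infimum functional $\iota$, using the weak convergence established in Theorem \ref{thm:random=000020empirical=000020process=000020weak} as the input. Three ingredients must be assembled: the weak convergence $\sqrt{n}(R_n - r) \rightsquigarrow R + \bm{Z}^{\top}\nabla_2 g(\cdot,\bm{\beta}_0)$ in $\ell^{\infty}(\Theta)$; the Hadamard directional differentiability of $\iota$ at $r$ with the explicit derivative formula supplied by \citet[Thm.~3.1]{westerhout2024asymptotic}; and the identifications $\iota(r)=\psi$ and $\iota(R_n)=\hat{\psi}_n$, the latter only needing to hold with probability approaching one. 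All three force $\Theta$ to be chosen compact but large enough to contain the relevant minimizers.

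First, I would fix $\Theta$. The map $r(\bm{\beta})=\mathbb{E}[(Y-\bm{X}^{\top}\mathbf{D}(\bm{\beta}_0)\bm{\beta})^{2}]$ is a convex quadratic whose Hessian $2\mathbf{D}(\bm{\beta}_0)\mathbb{E}[\bm{X}\bm{X}^{\top}]\mathbf{D}(\bm{\beta}_0)$ vanishes in the irrelevant coordinates, so $r$ admits a canonical bounded minimizer obtained by setting the irrelevant coordinates to zero and solving the reduced normal equations on the relevant submodel, under a mild non-degeneracy condition on the corresponding subblock of $\mathbb{E}[\bm{X}\bm{X}^{\top}]$. Choosing $\Theta$ to be any compact set whose interior contains this canonical minimizer and a neighbourhood of the origin in the irrelevant coordinates delivers $\iota(r)=\psi$. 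By Assumption \ref{assu:asymptotic=000020beta} and a standard tightness argument for the empirical normal equations, the minimizers of $R_n$ restricted to their relevant coordinates are $O_{\mathbb{P}}(1)$, so enlarging $\Theta$ if necessary yields $\iota(R_n)=\hat{\psi}_n$ on an event of probability approaching one.

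With this $\Theta$ in hand, the Hadamard directional differentiability of $\iota$ at $r$, with derivative $\dot{\iota}_r(\eta)=\lim_{\varepsilon\searrow 0}\inf_{\bm{\beta}\in\mathcal{B}_{\varepsilon}(r)}\eta(\bm{\beta})$, is precisely the content of \citet[Thm.~3.1]{westerhout2024asymptotic}. The limiting random element $R+\bm{Z}^{\top}\nabla_2 g(\cdot,\bm{\beta}_0)$ has continuous sample paths in $\bm{\beta}$ (the Gaussian component by the construction underlying Theorem \ref{thm:random=000020empirical=000020process=000020weak}, and the linear drift because $g$ is smooth in its first argument under Assumption \ref{assu:=000020thresholds}), so it lies in the appropriate tangential subspace almost surely. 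The directional functional delta method of \citet{Romisch:2014aa} then yields
\[
\sqrt{n}\bigl(\iota(R_n)-\iota(r)\bigr)\rightsquigarrow \dot{\iota}_r\bigl(R+\bm{Z}^{\top}\nabla_2 g\bigr),
\]
and Slutsky's lemma on the event $\{\iota(R_n)=\hat{\psi}_n\}$ converts this into the stated convergence of $\sqrt{n}(\hat{\psi}_n-\psi)$.

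The principal obstacle is the choice of $\Theta$. Because the Hessian of $r$ is singular in the irrelevant coordinates, the sublevel set $\mathcal{B}_{\varepsilon}(r)$ is unbounded as a subset of $\mathbb{R}^{p}$, and truncating to a compact $\Theta$ without altering the infimum is straightforward for the deterministic $r$ but for $R_n$ requires a uniform-in-$n$ tightness estimate on its empirical minimizer. Establishing that estimate from only Assumptions \ref{assu:=000020thresholds}, \ref{assu:fourth=000020moments}, and \ref{assu:asymptotic=000020beta}, rather than invoking stronger moment or eigenvalue conditions on $\bm{X}$, is the technical crux; once in hand, the remaining steps are mechanical applications of the delta method and of \citet{westerhout2024asymptotic}.
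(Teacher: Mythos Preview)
Your core approach---apply the directional functional delta method of \citet{Romisch:2014aa} with the derivative formula from \citet[Thm.~3.1]{westerhout2024asymptotic}, then verify that $\Theta$ can be taken large enough for $\iota(r)=\psi$---is exactly the paper's proof.

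However, you have misread the target of the lemma. The stated conclusion is about $\iota(R_n)$, the infimum of $R_n$ over the compact $\Theta$, not about $\hat{\psi}_n=\min_{\bm{\beta}\in\mathbb{R}^p}R_n(\bm{\beta})$. Consequently, once the delta method gives $\sqrt{n}(\iota(R_n)-\iota(r))\rightsquigarrow F$ and you have checked $\iota(r)=\psi$, the lemma is already proved. The paper's argument stops there: it solves the Fermat condition for $r$, notes that the relevant-coordinate solution $\bm{\beta}_{\mathrm R}^*$ is unique and finite, sets the irrelevant coordinates to zero, and takes $\Theta\supset[-m,m]^p$ with $m=\max_j|\beta_j^{**}|$. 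No statement about empirical minimizers is made or needed.

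Your third ingredient, the identification $\iota(R_n)=\hat{\psi}_n$ on an event of probability approaching one, and the tightness estimate you call the ``principal obstacle,'' are therefore not part of this lemma at all. They may become relevant downstream when $\hat{\psi}_{n,k}$ enters the information criterion in Theorem~\ref{thm:Consistenct=000020of=000020K}, but Lemma~\ref{lem:weak=000020convergence=000020min} as stated requires nothing of the sort. Drop that discussion and the proof is complete.
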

\begin{proof}
The result follows directly from the directional functional delta
method to show that $\sqrt{n}\left(\iota\left(R_{n}\right)-\iota\left(r\right)\right)\rightsquigarrow F$.
It then suffices to check that we can choose a large enough $\Theta$
so that $\iota\left(r\right)=\psi$. Indeed, $r$ is quadratic and
is minimized on $\mathbb{R}^{p}$ at any $\bm{\beta}_{}^{*}$ that
solves the Fermat condition
\[
\mathbf{D}_{}\left(\bm{\beta}_{0}\right)\mathbb{E}\left[\bm{X}Y\right]=\mathbf{D}\left(\bm{\beta}_{0}\right)\mathbb{E}\left[\bm{X}\bm{X}^{\top}\right]\mathbf{D}\left(\bm{\beta}_{0}\right)\bm{\beta}_{}^{*}\text{.}
\]
The system has potentially infinitely many solutions $\bm{\beta}_{}^{*}$, although
the relevant coordinates of $\bm{\beta}_{}^{*}$ will be the same
as the solution
\[
\bm{\beta}_{\text{R}}^{*}=\left\{ \mathbf{D}_{\text{R}}\left(\bm{\beta}_{0}\right)\mathbb{E}\left[\bm{X}_{\text{R}}\bm{X}_{\text{R}}^{\top}\right]\mathbf{D}_{\text{R}}\left(\bm{\beta}_{0}\right)\right\} ^{-1}\mathbf{D}_{\text{R}}\left(\bm{\beta}_{0}\right)\mathbb{E}\left[\bm{X}_{\text{R}}Y\right]\text{,}
\]
the minimizer of $r_{\text{R}}$ on $\mathbb{R}^{\left|{\cal S}^{\text{c}}\right|}$.
In particular, $r$ is minimized at $\bm{\beta}^{**}$ which equals
$\bm{\beta}_{\text{R}}^{*}$ on the relevant coordinates and $0$
elsewhere. We thus have $\bm{\beta}^{**}\in\left[-m,m\right]^{p}\subset\Theta$,
where $m=\max_{j\in\left[p\right]}\left|\beta_{j}^{**}\right|$ .
\end{proof}
Note that since $K$ is finite, a compact $\Theta$ large
enough so that Lemma \ref{lem:weak=000020convergence=000020min} holds
for all $k\in\left[K\right]$ can be constructed as a union over the
choice for each $k$. As a consequence of Lemma \ref{lem:weak=000020convergence=000020min},
the same proof as that of \citet[Thm. 1]{nguyen2024panic} yields
the consistency of $\hat{K}_{n}$ and thus the variable screening
property of $\hat{{\cal S}}_{n}$.
\begin{thm}
\label{thm:Consistenct=000020of=000020K}Under Assumptions \ref{assu:=000020thresholds}--\ref{assu:asymptotic=000020beta},
we can choose a compact $\Theta\subset\mathbb{R}^{p}$ such that
\[
\lim_{n\to\infty}\mathbb{P}\left(\hat{K}_{n}=k_{0}\right)=1\text{.}
\]
\end{thm}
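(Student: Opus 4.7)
The plan is to show $\mathbb{P}(\hat{K}_n = k_0) \to 1$ by a finite pairwise comparison argument of the classical model-selection-consistency type, with Lemma~\ref{lem:weak=000020convergence=000020min} supplying the asymptotic inputs. Since $\hat{K}_n = \min\{\arg\min_{k\in[K]} (\hat{\psi}_{n,k}+P_{n,k})\}$, the event $\{\hat{K}_n = k_0\}$ coincides with the conjunction of the events $E_k^{-} = \{\hat{\psi}_{n,k_0}+P_{n,k_0} < \hat{\psi}_{n,k}+P_{n,k}\}$ for $k<k_0$ and $E_k^{+} = \{\hat{\psi}_{n,k_0}+P_{n,k_0} \le \hat{\psi}_{n,k}+P_{n,k}\}$ for $k>k_0$. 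Since $K$ is fixed and finite, a union bound will reduce the task to showing $\mathbb{P}(E_k^{\pm}) \to 1$ for each individual $k$. Before the comparisons, I first enlarge $\Theta$ once and for all to a compact set on which Lemma~\ref{lem:weak=000020convergence=000020min} is valid for every $k\in[K]$; this is possible by taking the union of the $K$ sets produced by the lemma, because $K$ is finite.

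For $k<k_0$, I use that the sequence $\psi_1\ge\psi_2\ge\dots\ge\psi_K=\psi_0$, combined with Assumption~\ref{assu:delta=000020beta=000020relationships} and the definition of $k_0$, forces $\psi_k > \psi_{k_0}$ strictly (some relevant coordinate is thresholded out in $\mathcal{S}_k$ but not in $\mathcal{S}_{k_0}$). Lemma~\ref{lem:weak=000020convergence=000020min} yields $\hat{\psi}_{n,k}\overset{\mathbb{P}}{\to}\psi_k$ and $\hat{\psi}_{n,k_0}\overset{\mathbb{P}}{\to}\psi_{k_0}$, so $\hat{\psi}_{n,k}-\hat{\psi}_{n,k_0}\overset{\mathbb{P}}{\to}\psi_k-\psi_{k_0}>0$, while Assumption~\ref{assu:=000020Penalty=000020shape} gives $P_{n,k_0}-P_{n,k}\overset{\mathbb{P}}{\to}0$. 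The separation of the limiting risks dominates the vanishing penalty gap, delivering $\mathbb{P}(E_k^{-})\to 1$.

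For $k>k_0$, the limiting risks coincide, $\psi_k=\psi_{k_0}=\psi_0$, so the leading-order cancellation forces us down to the $\sqrt{n}$ scale. From Lemma~\ref{lem:weak=000020convergence=000020min}, both $\sqrt{n}(\hat{\psi}_{n,k}-\psi_k)$ and $\sqrt{n}(\hat{\psi}_{n,k_0}-\psi_{k_0})$ are weakly convergent, hence jointly $O_{\mathbb{P}}(1)$, so $\sqrt{n}(\hat{\psi}_{n,k}-\hat{\psi}_{n,k_0}) = O_{\mathbb{P}}(1)$. On the other hand Assumption~\ref{assu:=000020Penalty=000020shape} (applied with $l=k>k_0$) gives $\sqrt{n}(P_{n,k}-P_{n,k_0})\overset{\mathbb{P}}{\to}\infty$. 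Adding these, $\sqrt{n}\bigl[(\hat{\psi}_{n,k}+P_{n,k})-(\hat{\psi}_{n,k_0}+P_{n,k_0})\bigr]\overset{\mathbb{P}}{\to}\infty$, which yields $\mathbb{P}(E_k^{+})\to 1$. Applying the union bound over $k\in[K]\setminus\{k_0\}$ then gives $\mathbb{P}(\hat{K}_n=k_0)\to 1$.

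The principal obstacle is the $k>k_0$ case, where a naive law-of-large-numbers argument is not fine enough because the candidate models are asymptotically indistinguishable in expected risk; one must lean on the weak-convergence statement in Lemma~\ref{lem:weak=000020convergence=000020min} at the $\sqrt{n}$ rate and match it against the precisely tuned $\sqrt{n}$-divergence of the penalty gap dictated by Assumption~\ref{assu:=000020Penalty=000020shape}. This is exactly why SWIC-type penalties of order $n^{-1/2}\log n$ are used rather than BIC-type penalties of order $n^{-1}\log n$, and why referencing the argument of \citet[Thm.~1]{nguyen2024panic}, which handles this exact rate matching, closes the proof once the preceding empirical-process machinery is in place.
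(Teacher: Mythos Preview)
Your proof is correct and is precisely the argument the paper points to: the paper's own proof is the one-line remark that, given Lemma~\ref{lem:weak=000020convergence=000020min}, the result follows by the same proof as \citet[Thm.~1]{nguyen2024panic}, and you have spelled out exactly that pairwise-comparison argument (consistency-level separation for $k<k_0$, $\sqrt{n}$-tightness versus the diverging penalty gap for $k>k_0$, then a union bound over the finite $[K]$). One minor remark: your appeal to Assumption~\ref{assu:delta=000020beta=000020relationships} in the $k<k_0$ case is unnecessary---and that assumption is not among the theorem's hypotheses---since $\psi_k>\psi_{k_0}$ for $k<k_0$ follows directly from the definition $k_0=\min\arg\min_{k}\psi_k$.
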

The following consistency theorem comprises our main technical result
and is a direct consequence of the consistency of $\hat{K}_{n}$.
\begin{thm}
\label{thm:=000020Consistent=000020S}Under Assumptions \ref{assu:delta=000020beta=000020relationships}--\ref{assu:asymptotic=000020beta},
we can choose a compact $\Theta\subset\mathbb{R}^{p}$ such that 
\[
\lim_{n\to\infty}\mathbb{P}\left(\hat{{\cal S}}_{n}={\cal S}_{0}\right)=1\text{.}
\]
\end{thm}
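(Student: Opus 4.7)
The plan is to combine the already-established consistency of $\hat{K}_n$ from Theorem \ref{thm:Consistenct=000020of=000020K} with a coordinate-wise consistency argument for the underlying estimator $\hat{\bm\beta}_n$. First, I would translate Assumption \ref{assu:delta=000020beta=000020relationships} into the identification $\mathcal{S}_{k_0} = \mathcal{S}_0$. The assumption gives $\delta_K < \min_{j \notin \mathcal{S}_0} |\beta_{0,j}|$, so $\mathcal{S}_K = \mathcal{S}_0$ and hence $\psi_K = \psi_0$. Since $\psi_0 \le \psi_k$ for every $k$ (as noted in the discussion preceding Assumption \ref{assu:delta=000020beta=000020relationships}), we have $K \in \arg\min_k \psi_k$, so $k_0$ is well defined. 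Moreover, the only way to achieve $\psi_k = \psi_0$ is $\mathcal{S}_k \subset \mathcal{S}_0$, and since $\mathcal{S}_0 \subset \mathcal{S}_k$ is automatic, this yields $\mathcal{S}_{k_0} = \mathcal{S}_0$, equivalently $\delta_{k_0} < \min_{j \notin \mathcal{S}_0} |\beta_{0,j}|$.

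Next, I would decompose the target event. On the event $\{\hat{K}_n = k_0\}$, we have $\hat{\mathcal{S}}_n = \mathcal{S}_{k_0, n} = \{j \in [p] : |\hat{\beta}_{n,j}| \le \delta_{k_0}\}$, so
\[
\mathbb{P}\left(\hat{\mathcal{S}}_n = \mathcal{S}_0\right) \ge \mathbb{P}\left(\hat{K}_n = k_0\right) + \mathbb{P}\left(\mathcal{S}_{k_0, n} = \mathcal{S}_0\right) - 1.
\]
Theorem \ref{thm:Consistenct=000020of=000020K} provides $\mathbb{P}(\hat{K}_n = k_0) \to 1$, reducing the task to showing $\mathbb{P}(\mathcal{S}_{k_0, n} = \mathcal{S}_0) \to 1$ for the fixed threshold $\delta_{k_0}$. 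Assumption \ref{assu:asymptotic=000020beta} gives $\sqrt{n}(\hat{\bm\beta}_n - \bm\beta_0) \rightsquigarrow \bm{Z}$, which via Slutsky's theorem implies $\hat{\bm\beta}_n \overset{\mathbb{P}}{\to} \bm\beta_0$. For each $j \in \mathcal{S}_0$, $\hat{\beta}_{n,j} \overset{\mathbb{P}}{\to} 0 < \delta_{k_0}$, so $\mathbb{P}(|\hat{\beta}_{n,j}| > \delta_{k_0}) \to 0$; for each $j \notin \mathcal{S}_0$, the continuous mapping theorem together with step one yields $|\hat{\beta}_{n,j}| \overset{\mathbb{P}}{\to} |\beta_{0,j}| > \delta_{k_0}$, so $\mathbb{P}(|\hat{\beta}_{n,j}| \le \delta_{k_0}) \to 0$. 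A finite union bound over the $p$ coordinates then gives $\mathbb{P}(\mathcal{S}_{k_0, n} = \mathcal{S}_0) \to 1$, completing the proof.

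The main obstacle is largely organizational rather than analytical: one must carefully distinguish between the deterministic set $\mathcal{S}_{k_0}$ defined via $\bm\beta_0$ and the random set $\mathcal{S}_{k_0, n}$ defined via $\hat{\bm\beta}_n$, then splice together the two stochastic statements---consistency of the random index $\hat{K}_n$ and consistency of the random estimator $\hat{\bm\beta}_n$---without double-counting failure events. All of the substantive analytic work has been performed in establishing Theorem \ref{thm:Consistenct=000020of=000020K}; the present result is essentially a corollary once the beta-min condition of Assumption \ref{assu:delta=000020beta=000020relationships} is in force, since that condition is precisely what ensures $\delta_{k_0}$ lies strictly between zero and the smallest nonzero true signal.
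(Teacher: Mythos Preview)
Your proposal is correct and follows essentially the same approach as the paper: identify $\mathcal{S}_{k_0}=\mathcal{S}_0$ from Assumption~\ref{assu:delta=000020beta=000020relationships}, split via a Bonferroni-type bound into the events $\{\hat K_n=k_0\}$ and $\{\mathcal{S}_{k_0,n}=\mathcal{S}_0\}$, invoke Theorem~\ref{thm:Consistenct=000020of=000020K} for the first, and use consistency of $\hat{\bm\beta}_n$ with a finite union bound for the second. The paper's version differs only in that it spends more space explicitly constructing the compact $\Theta$ containing all minimizers $\bm\beta_k^{*}$ (via the scaling $\tilde\beta_{\mathrm{R},j}^{*}/t_k(\beta_{0,j})$), whereas you implicitly inherit $\Theta$ from Theorem~\ref{thm:Consistenct=000020of=000020K}; and it writes the coordinate-wise step with the uniform gap $c=\min_{j}\bigl|\delta_{k_0}-|\beta_{0,j}|\bigr|$ rather than treating $j\in\mathcal{S}_0$ and $j\notin\mathcal{S}_0$ separately, but these are cosmetic differences.
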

\begin{proof}
We first show that ${\cal S}_{k^{0}}={\cal S}_{0}$. Under Assumption
\ref{assu:delta=000020beta=000020relationships}, $\min_{j\notin{\cal S}_{0}}\left|\beta_{0,j}\right|>\delta_{K}$,
we have that the least squares loss 
\[
\tilde{r}_{}\left(\bm{\beta}\right)=\mathbb{E}\left[\left\{ Y-\bm{X}^{\top}\bm{\beta}\right\} ^{2}\right]
\]
has the same minimum as that of the relevant loss function
\[
\tilde{r}_{\text{R}}\left(\bm{\beta}_{\text{R}}\right)=\mathbb{E}\left[\left\{ Y-\bm{X}_{\text{R}}^{\top}\bm{\beta}_{\text{R}}\right\} ^{2}\right]\text{,}
\]
where the relevant set is ${\cal S}_{0}^{\text{c}}={\cal S}_{K}^{\text{c}}$.
However, we note that that this minimum is also the same as that of
the relevant thresholded loss $r_{\text{R}}=r_{K,\text{R}}$ since,
for each $j\in{\cal S}_{0}^{\text{c}}$, $\beta_{j}\mapsto\beta_{j}t_{K}\left(\beta_{0,j}\right)$
is surjective because $t_{K}\left(\beta_{0,j}\right)$ is positive
by Assumption \ref{assu:delta=000020beta=000020relationships} (cf.
\citealp[Sec. A.4.11]{hinderer2016dynamic}). If we write the minimizer
of $\tilde{r}_{\text{R}}$ as $\tilde{\bm{\beta}}_{\text{R}}^{*}$,
i.e., the least square solution
\[
\tilde{\bm{\beta}}_{\text{R}}^{*}=\mathbb{E}\left[\bm{X}_{\text{R}}\bm{X}_{\text{R}}^{\top}\right]^{-1}\mathbb{E}\left[\bm{X}_{\text{R}}Y\right]\text{,}
\]
then the minimizer of $r_{\text{R}}$ is simply a scaling of $\tilde{\bm{\beta}}_{\text{R}}^{*}$,
of the form
\[
\bm{\beta}_{\text{R}}^{*}=\left[\tilde{\beta}_{\text{R},j}^{*}/t_{K}\left(\beta_{0,j}\right)\right]_{j\in{\cal S}_{0}^{\text{c}}}\text{.}
\]
In the same way, for each $k\in\left[K\right]$, we can argue that
$k$th relevant thresholded loss $r_{k\text{,}\text{R}}$ has the
same minimum as that of $k$th relevant least squares loss
\[
\tilde{r}_{k,\text{R}}\left(\bm{\beta}_{k,\text{R}}\right)=\mathbb{E}\left[\left\{ Y-\bm{X}_{k,\text{R}}^{\top}\bm{\beta}_{k,\text{R}}\right\} ^{2}\right]\text{,}
\]
defined by the relevant set ${\cal S}_{k}^{\text{c}}=\left\{ j\in\left[p\right]:\left|\beta_{0,j}\right|>\delta_{k}\right\} $.
Thus, with the minimizer of $\tilde{r}_{k,\text{R}}$ written as $\tilde{\bm{\beta}}_{k,\text{R}}^{*}$,
we can obtain the minimizer $\bm{\beta}_{k,\text{R}}^{*}$ of $r_{k,\text{R}}$
via the same scaling argument. Finally, since $r_{k}$ has the same
minimum as $r_{k,\text{R}}$, where we can take the minimizer $\bm{\beta}_{k}^{*}$
to have relevant elements equal to $\bm{\beta}_{k,\text{R}}^{*}$
and irrelevant elements $0$. But since $t_{k}\left(\beta_{0,j}\right)>0$,
for each $k\in\left[K\right]$ and $j\in{\cal S}_{0}^{\text{c}}$,
we can construct a sufficiently large $\Theta\supset\left[-m,m\right]^{p}$
such that $\bm{\beta}_{k}^{*}\in\Theta$ for all $k$, by choosing
\[
m=\max\left\{ \left|\beta_{k,j}^{*}\right|/t_{k}\left(\beta_{0,j}\right):j\in\left[p\right],k\in\left[K\right]\right\} \text{,}
\]
with the convention that $0/0=0$. We have thus shown that $\iota\left(r_{k}\right)=\psi_{k}$
for each $k\in\left[K\right]$. 

Due to Assumption \ref{assu:delta=000020beta=000020relationships},
we have that $\psi_{k}\ge\psi_{K}=\psi_{0}$, with equality if and
only if ${\cal S}_{k}\subset{\cal S}_{K}={\cal S}_{0}$. But by definition
of $k_{0}$, it must hold that $\psi_{k_{0}}=\psi_{K}$, and thus
${\cal S}_{k_{0}}\subset{\cal S}_{0}$. The fact that ${\cal S}_{0}\subset{\cal S}_{k_{0}}$
is trivial by observation that $t_{k}\left(0\right)=0$ for each $k$,
under Assumption \ref{assu:=000020thresholds}.

Next, observe that
\begin{align*}
\mathbb{P}\left(\hat{{\cal S}}_{n}={\cal S}_{0}\right)=\mathbb{P}\left({\cal S}_{\hat{K}_{n},n}={\cal S}_{k_{0}}\right)\ge\mathbb{P}\left(\hat{K}_{n}=k_{0},{\cal S}_{k_{0},n}={\cal S}_{k_{0}}\right)\text{,}
\end{align*}
and by Bonferonni's inequality, 
$$\mathbb{P}\left(\hat{K}_{n}=k_{0},{\cal S}_{k_{0},n}={\cal S}_{k_{0}}\right)\ge1-\mathbb{P}\left(\hat{K}_{n}\ne k_{0}\right)-\mathbb{P}\left({\cal S}_{k_{0},n}\ne{\cal S}_{k_{0}}\right).$$
Since $\mathbb{P}\left(\hat{K}_{n}=k_{0}\right)\to1$ by Theorem \ref{thm:Consistenct=000020of=000020K},
it suffices to show that $\mathbb{P}\left({\cal {\cal S}}_{k_{0},n}={\cal S}_{k_{0}}\right)\to1$.
Writing $\mathbf{1}\left(\mathsf{A}\right)$ as the indicator function
of the statement $\mathsf{A}$, we have that
\begin{align*}
\mathbb{P}\left({\cal {\cal S}}_{k_{0},n}\ne{\cal S}_{k_{0}}\right) & =\mathbb{P}\left(\bigcup_{j=1}^{p}\left\{ \mathbf{1}\left(t_{k_{0}}\left(\hat{\beta}_{n,j}\right)=0\right)\ne\mathbf{1}\left(t_{k_{0}}\left(\beta_{0,j}\right)=0\right)\right\} \right)\\
 & \le\sum_{j=1}^{p}\mathbb{P}\left(\mathbf{1}\left(t_{k_{0}}\left(\hat{\beta}_{n,j}\right)=0\right)\ne\mathbf{1}\left(t_{k_{0}}\left(\beta_{0,j}\right)=0\right)\right)\\
 & \le\sum_{j=1}^{p}\mathbb{P}\left(\left|\left|\hat{\beta}_{n,j}\right|-\left|\beta_{0,j}\right|\right|>c\right)\text{,}
\end{align*}
where $c=\min_{j\in\left[p\right]}\left|\delta_{k_{0}}-\left|\beta_{0,j}\right|\right|$.
But by Assumption \ref{assu:asymptotic=000020beta}, $\hat{\bm{\beta}}_{n}\stackrel{\mathbb{P}}{\longrightarrow}\bm{\beta}_{0}$
and it follows from continuous mapping that 
\[
\lim_{n\to\infty}\sum_{j=1}^{p}\mathbb{P}\left(\left|\left|\hat{\beta}_{n,j}\right|-\left|\beta_{0,j}\right|\right|>c\right)=0\text{,}
\]
which completes the proof.
\end{proof}
Finally, we return to one of our initial motivations: to justify the ad-hoc procedure of thresholding out ``small'' estimates of $\hat{\bm\beta}_n$. Using the estimate $\hat K_n$, we define the (hard) thresholded estimator $\bar{\bm\beta}_n$ component-wise by $\bar{\beta}_{n,j} = \hat{\beta}_{n,j} \mathbf{1}\left(t_{\hat K_{n}}(\hat{\beta}_{nj}) > 0\right)$, for each $j\in[p]$. This is equivalent to setting the $j$th coordinate of $\bar{\bm\beta}_n$ to zero if $j\in\hat{\cal S}_n$ and declaring the elements of all other coordinates equal to the corresponding elements of $\hat{\bm\beta}_n$. Below, we demonstrate a so-called oracle property for $\bar{\bm\beta}_n$, by combining the convergence in distribution of $\hat{\bm{\beta}}_n$ and the consistency of $\hat{\cal S}_n$. Using the true irrelevant set ${\cal S}_{0}$, we can write $\bm{\beta}_0 = (\bm{\beta}_{0,\mathrm R},\bm{\beta}_{0,\mathrm I}) = (\bm{\beta}_{0,\mathrm R},\bm{0})$, where $\bm{\beta}_{\mathrm I}$ denotes the subvector of $\bm{\beta}$ that consists of only the irrelevant coordinates. The same notation extends to other vectors of length $p$; e.g., ${\bm Z}_\mathrm{R}$ and ${\bm Z}_\mathrm{I}$ denote the relevant and irrelevant subvectors of ${\bm Z}$, respectively. The following result is proved in Appendix \ref{sec:Proof=000020of=000020theorem=0000204}.

\begin{thm}\label{thmoracle}
Under Assumptions \ref{assu:delta=000020beta=000020relationships}--\ref{assu:asymptotic=000020beta},
$\sqrt{n}(\bar{\bm{\beta}}_{n,\mathrm R} - \bm{\beta}_{0,\mathrm R}) \rightsquigarrow \bm{Z}_{\mathrm R}$ and $\lim_{n\to\infty}\mathbb{P}(\bar{\bm{\beta}}_{n,\mathrm I} = \bm{0}) = 1$.
\end{thm}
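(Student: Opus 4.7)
The plan is to exploit the fact that Theorem \ref{thm:=000020Consistent=000020S} already delivers the selection consistency $\mathbb{P}(\hat{\cal S}_n={\cal S}_0)\to 1$ as a black box. Set $A_n=\{\hat{\cal S}_n={\cal S}_0\}$, so that $\mathbb{P}(A_n)\to 1$. On $A_n$ the definition of $\bar{\bm\beta}_n$ simplifies drastically: for $j\in{\cal S}_0$, membership in $\hat{\cal S}_n$ forces $t_{\hat K_n}(\hat\beta_{n,j})=0$ and hence $\bar\beta_{n,j}=0$, while for $j\in{\cal S}_0^{\mathrm c}$, non-membership in $\hat{\cal S}_n$ gives $t_{\hat K_n}(\hat\beta_{n,j})>0$ and hence $\bar\beta_{n,j}=\hat\beta_{n,j}$. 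In short, on $A_n$ we have $\bar{\bm\beta}_{n,\mathrm I}=\bm 0$ and $\bar{\bm\beta}_{n,\mathrm R}=\hat{\bm\beta}_{n,\mathrm R}$.

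The second claim is then immediate: since $A_n\subseteq\{\bar{\bm\beta}_{n,\mathrm I}=\bm 0\}$, we get $\mathbb{P}(\bar{\bm\beta}_{n,\mathrm I}=\bm 0)\ge\mathbb{P}(A_n)\to 1$.

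For the first claim, I would argue via a Slutsky-type decomposition. Writing
\[
\sqrt n\bigl(\bar{\bm\beta}_{n,\mathrm R}-\bm\beta_{0,\mathrm R}\bigr)=\sqrt n\bigl(\hat{\bm\beta}_{n,\mathrm R}-\bm\beta_{0,\mathrm R}\bigr)+\sqrt n\bigl(\bar{\bm\beta}_{n,\mathrm R}-\hat{\bm\beta}_{n,\mathrm R}\bigr),
\]
the second summand vanishes on $A_n$, so $\mathbb{P}\bigl(\sqrt n(\bar{\bm\beta}_{n,\mathrm R}-\hat{\bm\beta}_{n,\mathrm R})\ne\bm 0\bigr)\le\mathbb{P}(A_n^{\mathrm c})\to 0$, giving convergence in probability to $\bm 0$. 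Meanwhile, Assumption \ref{assu:asymptotic=000020beta} combined with the continuous mapping theorem applied to the coordinate projection $\bm v\mapsto\bm v_{\mathrm R}$ yields $\sqrt n(\hat{\bm\beta}_{n,\mathrm R}-\bm\beta_{0,\mathrm R})\rightsquigarrow\bm Z_{\mathrm R}$. Slutsky's lemma closes the argument.

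There is no real obstacle here; all the hard analytic work was already done in establishing Theorem \ref{thm:=000020Consistent=000020S}. The only point requiring a modest sanity check is the equivalence between $\hat{\cal S}_n$ (defined via the threshold $\delta_{\hat K_n}$) and the support of the zero entries produced by the rule $\mathbf{1}(t_{\hat K_n}(\hat\beta_{n,j})>0)$; this is guaranteed by Assumption \ref{assu:=000020thresholds}, which ensures $t_k(b)=0$ iff $|b|\le\delta_k$.
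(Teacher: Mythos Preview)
Your proof is correct and follows essentially the same Slutsky-decomposition approach as the paper for the relevant coordinates. The only difference is organizational: you invoke Theorem~\ref{thm:=000020Consistent=000020S} directly as a black box (working on the event $A_n=\{\hat{\cal S}_n={\cal S}_0\}$), whereas the paper re-derives what it needs from the ingredients of that theorem---the consistency of $\hat K_n$ (Theorem~\ref{thm:Consistenct=000020of=000020K}) together with the consistency of $\hat{\bm\beta}_n$---via a union bound over coordinates. Your packaging is slightly cleaner; the paper's is slightly more self-contained. One small caveat: the equivalence $t_k(b)=0\iff|b|\le\delta_k$ that you attribute to Assumption~\ref{assu:=000020thresholds} is really a property of the specific spline construction rather than of the assumption as stated, but the paper itself relies on this equivalence implicitly, so your argument matches the intended setting.
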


The result of Theorem \ref{thmoracle} therefore justifies that the thresholding process is a one-step improvement to any non-sparse estimator that converges in distribution, producing a sparse estimator that has the same asymptotic distribution in the relevant coordinates. Furthermore, if $\bm{Z}$ is multivariate normal then so is $\bm{Z}_{\mathrm R}$, and thus $\bar{\bm{\beta}}_{n,\mathrm R}$ is an asymptotically normal estimator of $\bm{\beta}_{0,\mathrm R}$. Observe that this result provides the same oracle guarantees as those for sparse penalization estimators under similar conditions, such as those considered in \cite{fan2001variable}.

\section{Numerical studies}

All numerical results were produced in the R programming environment
and all codes are made available at \url{https://github.com/karfho/Threshold_IC}.

\subsection{Simulation studies}
\subsubsection{Ordinary least squares}\label{subsec:Ordinary-least-squares=000020simulation}

We begin by considering the performance of the thresholding procedure
in the linear regression setting, with various sample sizes and covariate
dimensions. Namely, we consider $n\in\left\{ 100,1000,10000\right\} $
and $p\in\left\{ 20,50\right\} $, where we replicated each simulation
scenario 100 times. For each $n$ and $p$, we simulate covariates
$\left(\bm{X}_{i}\right)_{i\in\left[n\right]}$ IID with the same
distribution as $\bm{X}\sim\text{N}\left(0,\bm{\Sigma}\right)$, and
we simulate errors $\left(\epsilon_{i}\right)_{i\in\left[n\right]}$
IID with standard normal distribution. The covariate matrix is taken
to have $\text{1}$s along the diagonal and $1/5$ in all off-diagonal
elements. This setup is representative of mildly correlated covariate
scenarios.

We generate $\left(Y_{i}\right)_{i\in\left[n\right]}$ as $Y_{i}=\bm{X}_{i}^{\top}\bm{\beta}_{0}+\epsilon_{i}$,
where we consider two scenarios for $\bm{\beta}_{0}$:
\begin{lyxlist}{00.00.0000}
\item [{S1}] $\bm{\beta}_{0}=\left(0.2,0.4,\dots,1.8,2.0,0,\dots,0\right)$,
\item [{S2}] $\bm{\beta}_{0}=\left(0.05,0.1,\dots,0.45,0.5,0,\dots,0\right)$,
\end{lyxlist}
with S1 and S2 corresponding to high and low signal-to-noise ratio
scenarios, respectively. Note that in both cases, the set of relevant
coordinates is ${\cal S}_{0}^{\text{c}}=\left[10\right]$, which is
independent of both $p$ and $n$. For each simulation replication,
we estimate $\bm{\beta}_{0}$ by the OLS (\ref{eq:=000020OLS}): $\hat{\bm{\beta}}_{n}$,
set $K=p$, and use threshold values
\begin{equation}
\left(\delta_{k}\right)_{k\in\left[K\right]}=\left(\left|\hat{\beta}_{n,\left(j\right)}\right|\right)_{i\in\left[p\right]}\text{,}\label{eq:=000020empirical=000020threshold}
\end{equation}
where $\left(\hat{\beta}_{n,\left(j\right)}\right)_{j\in\left[p\right]}$
are the elements of the estimator $\hat{\bm{\beta}}_{n}$, ordered
from largest to smallest: i.e., $\left|\hat{\beta}_{n,\left(1\right)}\right|<\left|\hat{\beta}_{n,\left(2\right)}\right|\dots<\left|\hat{\beta}_{n,\left(p\right)}\right|$.
We note that although our theory characterizes the behavior under a deterministic sequence $\left(\delta_{k}\right)_{k\in\left[K\right]}$, the choice is sensible because the procedure will threshold the signals at exactly where the increase in maximum signal magnitude of the relevant set occurs, which allows us to consider all possible leveled sets with minimal computation. However, if we operate with a predetermined set of thresholding values, then it is possible to miss out on certain leveled sets if multiple components of $\left|\hat\beta_n\right|$ lie in between some $[\delta_k, \delta_{k+1})$, and if no component falls in between $[\delta_k, \delta_{k+1})$, recomputing the minimal empirical risk is simply redundant. 

To satisfy Assumption \ref{assu:=000020Penalty=000020shape}, we noted
that one can take penalties of SWIC form: (\ref{eq:=000020generic=000020pen}).
For our purposes, we will consider a further restriction to the case
where
\begin{equation}
\alpha\left(\delta_{k}\right)=\frac{c}{\delta_{k}^{r}}\text{,}\label{eq:=000020SWIC=000020alpha}
\end{equation}
for some choices of $c,r>0$. In particular, to represent weak, medium,
and strong penalties, respectively, we take $\left(c,r\right)=\left(0.5,0.25\right)$,
$\left(0.75,0.4\right)$, and $\left(1,0.5\right)$. 

To measure the performance of our method, in the face of our theoretical
result that $\hat{K}_{n}$ exhibits model selection consistency, we
report the average (over replications) estimated threshold value $\delta_{\hat{K}_{n}}$,
average percentage of covariates that are relevant that are incorrectly
estimated to be irrelevant, and the average percentage of irrelevant
covariates that are correctly estimated to be irrelevant. The two
latter indices can be characterized as the average false negative
rate (FNR) and average true negative rate (TNR), respectively, as
percentages where
\[
\text{FNR}=1-\frac{\left|\hat{{\cal S}}_{n}^{\text{c}}\cap{\cal S}_{0}^{\text{c}}\right|}{\left|{\cal S}_{0}^{\text{c}}\right|}\text{, and }\text{TNR}=\frac{\left|\hat{{\cal S}}_{n}\cap{\cal S}_{0}\right|}{\left|{\cal S}_{0}\right|}\text{.}
\]
We report the outcomes in Tables \ref{tab:S1=000020OLS} and \ref{tab:S2=000020OLS}.

\begin{table}
\caption{Results for S1 using OLS estimators. All averages are computed over
100 replications. Note that $\delta_{k_0}<0.2$.}\label{tab:S1=000020OLS}

\centering{}%
\begin{tabular}{|c|cc|ccc|}
\hline 
\multicolumn{1}{|c}{} &  & \multicolumn{1}{c}{} & \multicolumn{3}{c|}{$\left(c,r\right)$}\tabularnewline
\multicolumn{1}{|c}{Measure} & $n$ & \multicolumn{1}{c}{$p$} & $\left(0.5,0.25\right)$ & $\left(0.75,0.4\right)$ & $\left(1,0.5\right)$\tabularnewline
\hline 
\hline 
\multirow{6}{*}{$\delta_{\hat{K}_{n}}$} & 100 & 20 & 0.0995 & 0.1726 & 0.2181\tabularnewline
 & 1000 & 20 & 0.06332 & 0.09462 & 0.1606\tabularnewline
 & 10000 & 20 & 0.0207 & 0.0207 & 0.0207\tabularnewline
 & 100 & 50 & 0.07655 & 0.1217 & 0.1729\tabularnewline
 & 1000 & 50 & 0.06087 & 0.121 & 0.156\tabularnewline
 & 10000 & 50 & 0.02739 & 0.02739 & 0.02739\tabularnewline
\hline 
\multirow{6}{*}{$\text{FNR}\times100\%$} & 100 & 20 & 2.8 & 6.3 & 10.2\tabularnewline
 & 1000 & 20 & 0.3 & 3.3 & 7.8\tabularnewline
 & 10000 & 20 & 0 & 0 & 0\tabularnewline
 & 100 & 50 & 2.1 & 3.3 & 4.1\tabularnewline
 & 1000 & 50 & 0.2 & 4.2 & 7\tabularnewline
 & 10000 & 50 & 0 & 0 & 0\tabularnewline
\hline 
\multirow{6}{*}{$\text{TNR}\times100\%$} & 100 & 20 & 64.8 & 87 & 96.4\tabularnewline
 & 1000 & 20 & 99.7 & 100 & 100\tabularnewline
 & 10000 & 20 & 100 & 100 & 100\tabularnewline
 & 100 & 50 & 38.075 & 55.85 & 70.425\tabularnewline
 & 1000 & 50 & 84.75 & 100 & 100\tabularnewline
 & 10000 & 50 & 100 & 100 & 100\tabularnewline
\hline 
\end{tabular}
\end{table}

\begin{table}
\caption{Results for S2 using OLS estimators. All averages are computed over
100 replications. Note that $\delta_{k_0}<0.05$.}\label{tab:S2=000020OLS}

\centering{}%
\begin{tabular}{|c|cc|ccc|}
\hline 
\multicolumn{1}{|c}{} &  & \multicolumn{1}{c}{} & \multicolumn{3}{c|}{$\left(c,r\right)$}\tabularnewline
\multicolumn{1}{|c}{Measure} & $n$ & \multicolumn{1}{c}{$p$} & $\left(0.5,0.25\right)$ & $\left(0.75,0.4\right)$ & $\left(1,0.5\right)$\tabularnewline
\hline 
\hline 
\multirow{6}{*}{$\delta_{\hat{K}_{n}}$} & 100 & 20 & 0.08472 & 0.1413 & 0.17\tabularnewline
 & 1000 & 20 & 0.0624 & 0.09443 & 0.12\tabularnewline
 & 10000 & 20 & 0.04549 & 0.0676 & 0.08767\tabularnewline
 & 100 & 50 & 0.07323 & 0.1143 & 0.1474\tabularnewline
 & 1000 & 50 & 0.04186 & 0.09816 & 0.1262\tabularnewline
 & 10000 & 50 & 0.04349 & 0.06519 & 0.09206\tabularnewline
\hline 
\multirow{6}{*}{$\text{FNR}\times100\%$} & 100 & 20 & 16.9 & 27.7 & 34.2\tabularnewline
 & 1000 & 20 & 11 & 20.1 & 25.3\tabularnewline
 & 10000 & 20 & 8.8 & 14.3 & 18\tabularnewline
 & 100 & 50 & 11.4 & 19 & 23.4\tabularnewline
 & 1000 & 50 & 5.4 & 18.8 & 27.3\tabularnewline
 & 10000 & 50 & 8.2 & 13.6 & 18.8\tabularnewline
\hline 
\multirow{6}{*}{$\text{TNR}\times100\%$} & 100 & 20 & 56.2 & 78.3 & 85.2\tabularnewline
 & 1000 & 20 & 96.2 & 100 & 100\tabularnewline
 & 10000 & 20 & 100 & 100 & 100\tabularnewline
 & 100 & 50 & 36.85 & 53.5 & 64.7\tabularnewline
 & 1000 & 50 & 74.325 & 99.675 & 100\tabularnewline
 & 10000 & 50 & 100 & 100 & 100\tabularnewline
\hline 
\end{tabular}
\end{table}

The results of Tables \ref{tab:S1=000020OLS} and \ref{tab:S2=000020OLS}
appear to indicate that the thresholding method usefully performs
variable selection, especially for larger values of $n$. As expected,
we observe that weaker penalization yields more conservative thresholding, and thus lowers the FNR, although at the expense of decreased TNR, and vice versa for stronger penalization. Supporting the asymptotic theory, we observe that both $(1 -$ TNR$)$ and FNR decrease as $n$ increases, and similarly, the optimal selected thresholds $\delta_{\hat{K}_{n}}$
get closer to the upper bound of $\delta_{k_0}$. Comparing Table
\ref{tab:S2=000020OLS} to Table \ref{tab:S1=000020OLS}, we observe,
as expected that the overall performance of the thresholding method
declines when the signal is weaker, and thus for comparable performance,
one requires larger sample sizes. Interestingly, we note that the
performance in TNR does not decline as severely as that for FNR. With
better calibration of the penalty function $\alpha$, one may anticipate
better overall performance in each scenario, although we know of no
general and theoretically supported approach for such a task in our setting.

\subsubsection{Ridge and adaptive ridge estimators}\label{subsec:Ridge-and-adaptive}

As an alternative to OLS, we may consider instead taking $\hat{\bm{\beta}}_{n}$
to be a ridge regression-type estimator, which naturally benefits
from the use of a thresholding scheme. A noted shortcoming of $l_{2}$
regularization approaches, such as ridge regression and adaptive ridge
regression, is that the shrinkage estimators are not sparse, like
the LASSO, but instead merely shrink weak signals arbitrarily close
to zero. As such, many practitioners have taken to simply conduct
arbitrary thresholding of small signals as an ad hoc approach to variable
selection, which our thresholding approach now makes rigorous. 

In this section, we consider the same simulation as in Section \ref{subsec:Ordinary-least-squares=000020simulation},
but instead using the classic ridge estimator \citep{hoerl1970ridge}
and the adaptive ridge (AR) estimator of \citet{dai2018broken} and
\citet{Ho:2024aa}. In particular, for tuning parameters $\lambda_{n},\xi_{n}>0$,
we define the ridge estimator as
\[
\hat{\bm{\beta}}_{\text{Ridge},n}=\left(\mathbf{X}^{\top}\mathbf{X}+\lambda_{n}\mathbf{I}\right)^{-1}\mathbf{X}^{\top}\bm{Y}\text{,}
\]
and for $s\in\mathbb{N}$, we define the AR estimator at the $s$th
step by
\[
\hat{\bm{\beta}}_{\text{AR},n}^{\left(s\right)}=\left(\mathbf{X}^{\top}\mathbf{X}+\xi_{n}\bm{\mathbf{\mathfrak{D}}}\left(\hat{\bm{\beta}}_{\text{AR},n}^{\left(s-1\right)}\right)\right)^{-1}\mathbf{X}^{\top}\bm{Y}\text{,}
\]
where $\bm{\mathbf{\mathfrak{D}}}\left(\bm{\beta}\right)=\text{diag}\left(1/\beta_{1}^{2},\dots,1/\beta_{p}^{2}\right)$,
and $\hat{\bm{\beta}}_{\text{AR},n}^{\left(0\right)}=\hat{\bm{\beta}}_{\text{Ridge},n}$.
Using $\lambda_{n}=\sqrt{n}$ and $\xi_{n}=1$, we compare the ridge
and AR estimator, with $s=5$, to the OLS for S1 when $n=100$ and
S2 when $n=1000$, with results reported in Tables \ref{tab:Ridge=000020S1}
and \ref{tab:Ridge=000020S2}, respectively.

\begin{table}
\caption{Comparison of performance between OLS, ridge, and AR estimators for
scenario S1 with $n=100$. Note that $\delta_{k_0}<0.2$ and $s=5$.}\label{tab:Ridge=000020S1}

\centering{}%
\begin{tabular}{|cc|ccc|}
\hline 
 & \multicolumn{1}{c}{} & \multicolumn{3}{c|}{$\left(c,r\right)$}\tabularnewline
Method & \multicolumn{1}{c}{Measure} & $\left(0.5,0.25\right)$ & $\left(0.75,0.4\right)$ & $\left(1,0.5\right)$\tabularnewline
\hline 
\hline 
\multirow{3}{*}{OLS} & $\delta_{\hat{K}_{n}}$ & 0.0995 & 0.1726 & 0.2181\tabularnewline
 & $\text{FNR}\times100\%$ & 2.8 & 6.3 & 10.2\tabularnewline
 & $\text{TNR}\times100\%$ & 64.8 & 87 & 96.4\tabularnewline
\hline 
\multirow{3}{*}{Ridge} & $\delta_{\hat{K}_{n}}$ & 0.1213 & 0.1962 & 0.2335\tabularnewline
 & $\text{FNR}\times100\%$ & 2.7 & 5.9 & 9.1\tabularnewline
 & $\text{TNR}\times100\%$ & 69.2 & 91 & 96.8\tabularnewline
\hline 
\multirow{3}{*}{AR} & $\delta_{\hat{K}_{n}}$ & 0.04553 & 0.09792 & 0.1506\tabularnewline
 & $\text{FNR}\times100\%$ & 6 & 8.9 & 11.9\tabularnewline
 & $\text{TNR}\times100\%$ & 94.2 & 97.2 & 98.7\tabularnewline
\hline 
\end{tabular}
\end{table}

\begin{table}
\caption{Comparison of performance between OLS, ridge, and AR estimators for
scenario S2 with $n=1000$. Note that $\delta_{k_0}<0.05$ and $s=5$.}\label{tab:Ridge=000020S2}

\centering{}%
\begin{tabular}{|cc|ccc|}
\hline 
 & \multicolumn{1}{c}{} & \multicolumn{3}{c|}{$\left(c,r\right)$}\tabularnewline
Method & \multicolumn{1}{c}{Measure} & $\left(0.5,0.25\right)$ & $\left(0.75,0.4\right)$ & $\left(1,0.5\right)$\tabularnewline
\hline 
\hline 
\multirow{3}{*}{OLS} & $\delta_{\hat{K}_{n}}$ & 0.04186 & 0.09816 & 0.1262\tabularnewline
 & $\text{FNR}\times100\%$ & 5.4 & 18.8 & 27.3\tabularnewline
 & $\text{TNR}\times100\%$ & 74.325 & 99.675 & 100\tabularnewline
\hline 
\multirow{3}{*}{Ridge} & $\delta_{\hat{K}_{n}}$ & 0.04006 & 0.09563 & 0.1225\tabularnewline
 & $\text{FNR}\times100\%$ & 5 & 18.9 & 27.3\tabularnewline
 & $\text{TNR}\times100\%$ & 74.05 & 99.725 & 100\tabularnewline
\hline 
\multirow{3}{*}{AR} & $\delta_{\hat{K}_{n}}$ & 0.05081 & 0.09292 & 0.1174\tabularnewline
 & $\text{FNR}\times100\%$ & 10.5 & 21.8 & 27.7\tabularnewline
 & $\text{TNR}\times100\%$ & 96.825 & 99.975 & 100\tabularnewline
\hline 
\end{tabular}
\end{table}

From Tables \ref{tab:Ridge=000020S1} and \ref{tab:Ridge=000020S2},
we observe that the OLS and ridge-type estimators perform similarly,
although there appear to be slight improvements when using the ridge-type
estimators. We note that since ridge and AR estimators conduct shrinkage,
more signals tend to be eliminated under each threshold, and thus
the thresholding algorithm is is better at eliminating irrelevant
signals. This can be seen in the improvement of TNR in both tables.
In particular, we observe that the AR estimator conducts very aggressive
shrinkage, causing much higher TNR than both OLS and ridge estimators,
even when the penalty is small. 

\subsection{Prostate cancer data example}

To illustrate the application of our method in practice, we consider
the prostate data set of \citet{stamey1989prostate}. These data consist
of $n=97$ observations, each with covariates consisting of $p=8$
clinical measurements from individual patients: the logarithm of cancer
volume $\left(X_{1}\right)$, logarithm of weight $\left(X_{2}\right)$,
age $\left(X_{3}\right)$, logarithm of the amount of benign prostatic
hyperplasia $\left(X_{4}\right)$, seminal vesicle invasion $\left(X_{5}\right)$,
logarithm of capsular penetration $\left(X_{6}\right)$, Gleason score
$\left(X_{7}\right)$, and percentage Gleason scores of four or five
$\left(X_{8}\right)$. The response variable is the logarithm of prostate-specific
antigen $\left(Y\right)$.

Following typical preprocessing protocols, we standardize the data
and, with $K=8$, we apply our thresholding method with $\left(\delta_{k}\right)_{k\in\left[K\right]}$
chosen as per (\ref{eq:=000020empirical=000020threshold}), using
SWIC penalties with $\alpha$ functions (\ref{eq:=000020SWIC=000020alpha}),
taking $\left(c,r\right)=\left(0.5,0.25\right)$, $\left(0.75,0.4\right)$,
and $\left(1,0.5\right)$, as in the simulations. The OLS, ridge and
AR estimators are used with $\lambda_{n}=\sqrt{n}$, $\xi_{n}=1$,
and $s=5$, as per Section \ref{subsec:Ridge-and-adaptive}. We report
the sets of relevant covariates obtained via the thresholding algorithm
${\cal \hat{S}}_{n}^{\text{c}}$ in Table \ref{tab:Prostate=000020nointer}.
Further, expanding the set of covariates by also considering pairwise
interactions, yielding a total of $p=36$ predictors, we report the
sizes of the sets of relevant covariates obtained from thresholding
$\left|\hat{{\cal S}}_{n}^{\text{c}}\right|$ in Table \ref{tab:Prostate=000020inter}.

\begin{table}
\caption{Sets of relevant coordinates $\hat{{\cal S}}_{n}$ obtained from
thresholding of prostate cancer data using OLS, ridge, and AR estimators.}\label{tab:Prostate=000020nointer}

\centering{}%
\begin{tabular}{|cccc|}
\hline 
 & \multicolumn{3}{c|}{$\left(c,r\right)$}\tabularnewline
Method & $\left(0.5,0.25\right)$ & $\left(0.75,0.4\right)$ & $\left(1,0.5\right)$\tabularnewline
\hline 
\hline 
\multirow{1}{*}{OLS} & $\left\{ 1,2,3,4,5,6,8\right\} $ & $\left\{ 1,2,5\right\} $ & $\left\{ 1\right\} $\tabularnewline
\hline 
\multirow{1}{*}{Ridge} & $\left\{ 1,2,5\right\} $ & $\left\{ 1,2,5\right\} $ & $\left\{ 1\right\} $\tabularnewline
\hline 
\multirow{1}{*}{AR} & $\left\{ 1,2,5\right\} $ & $\left\{ 1\right\} $ & $\left\{ 1\right\} $\tabularnewline
\hline 
\end{tabular}
\end{table}

\begin{table}
\caption{Size of relevant covariate sets $\hat{{\cal S}}_{n}$ obtained from
thresholding of prostate cancer data using OLS, ridge, and AR estimators,
with pairwise interactions.}\label{tab:Prostate=000020inter}

\centering{}%
\begin{tabular}{|cccc|}
\hline 
 & \multicolumn{3}{c|}{$\left(c,r\right)$}\tabularnewline
Method & $\left(0.5,0.25\right)$ & $\left(0.75,0.4\right)$ & $\left(1,0.5\right)$\tabularnewline
\hline 
\hline 
\multirow{1}{*}{OLS} & 15 & 14 & 14\tabularnewline
\hline 
\multirow{1}{*}{Ridge} & 30 & 14 & 2\tabularnewline
\hline 
\multirow{1}{*}{AR} & 3 & 1 & 1\tabularnewline
\hline 
\end{tabular}
\end{table}

From both Tables \ref{tab:Prostate=000020nointer} and \ref{tab:Prostate=000020inter},
we observe as expected, that stronger penalization implies less variables
being declared relevant, regardless of initial estimator used, while
in all but the first column of Table \ref{tab:Prostate=000020inter},
we observe that the hierarchy of shrinkage from OLS, ridge, and AR
estimators yields decreasing numbers of relevant covariates, as anticipated.
From Table \ref{tab:Prostate=000020nointer}, we conclude that the
important variables are $X_{1},X_{2},X_{5}$, with $X_{1}$ appearing
to be the most relevant of all covariates. This corroborates with
the outcomes from \cite{zou2005regularization}, \cite{lin2010bayesian} and \cite{sun2013consistent}, which identify the same variables as being most important.

\section{Conclusion}

Variable selection is a crucial step in many modern inference workflows,
with shrinkage estimators forming a useful class of methods for identifying
important covariates in the sparse signal setting. We have introduced
a systematic approach that takes any initial regression estimator
that exhibits convergence in distribution and outputs a consistent
estimator of the set of relevant covariates, under practical regularity
conditions. Our numerical studies demonstrate that our approach performs as theoretically anticipated, and functions well using OLS, ridge,
and AR estimators used as inputs, although our theory suggests that
the class of input estimators whose performance are guaranteed by
theory is far larger.

We anticipate three immediate avenues for future research. First,
to permit applications to infinite dimensional settings, we need to
allow for $K$ to be infinitely large, either countably or uncountably.
It is know that in some nested regression settings, model selection
procedures with uncountably large $K$ are possible (see, e.g., \citealp{zhang2024consistent}),
although the situations are quite different and we do not anticipate
the proof techniques to transfer without additional technical development.
Secondly, we have developed our theory in the linear regression setting
for ease of exposition and to keep the scope manageable. However,
with necessary modifications, there is little else that inhibits the
application of our approach to the generalized linear model, nonlinear
regression, linear functional regression, or other parametric regression
modeling settings where the notion of variable selection applies.
Thirdly, our current theory makes few restrictions on the class of
allowable penalty functions, and in effect, makes no recommendations
regarding the design of an efficient penalty function. Similar questions
of optimal penalization appears in the setting of model selection
oracle bounds, manifesting as the so called slope heuristic omnibus
procedure (cf. \citealp{birge2007minimal}, \citealp{baudry2012slope},
and \citealp{arlot2019minimal}). Although the breadth of application
of the slope heuristic does not apply in our asymptotic setting, it
is possible that an analogous procedure for automatic penalty calibration
exists.

\section*{Acknowledgments} 
KLKH is supported by the GPMI program at Kyushu University. HDN is funded by Australian Research Council grants: DP230100905 and DP250100860.

\appendix

\part*{Appendix}

\section{Proof of Theorem 1}\label{sec:Proof=000020of=000020Theorem=0000201}

Following the decomposition of \citet[Sec. 3.13]{van2023weak}, we
begin by writing:
\begin{eqnarray*}
\sqrt{n}\left\{ G_{n}\left(\bm{\beta};\hat{\bm{\beta}}_{n}\right)-g\left(\bm{\beta};\bm{\beta}_{0}\right)\right\}  & = & \sqrt{n}\left[\left\{ G_{n}\left(\bm{\beta};\hat{\bm{\beta}}_{n}\right)-g\left(\bm{\beta};\hat{\bm{\beta}}_{n}\right)\right\} -\left\{ G_{n}\left(\bm{\beta};\bm{\beta}_{0}\right)-g\left(\bm{\beta};\bm{\beta}_{0}\right)\right\} \right]\\
 &  & +\sqrt{n}\left\{ G_{n}\left(\bm{\beta};\bm{\beta}_{0}\right)-g\left(\bm{\beta};\bm{\beta}_{0}\right)\right\} +\sqrt{n}\left\{ g\left(\bm{\beta};\hat{\bm{\beta}}_{n}\right)-g\left(\bm{\beta};\bm{\beta}_{0}\right)\right\} \\
 & = & \text{(1A)}+\text{(1B)}+\text{(1C)}\text{.}
\end{eqnarray*}

Our program is to prove the weak convergence of terms (1B) and (1C)
to $R$ and $\bm{Z}^{\top}\nabla_{2}g(\cdot,\bm{\beta}_0)$, respectively, and to demonstrate
that $\text{(1A)}=o_{\mathbb{P}}\left(1\right)$. The following result
follows from \citet[Thm. 3.13.4]{van2023weak}.
\begin{prop}
\label{prop:1A}Under Assumptions \ref{assu:=000020thresholds}, \ref{assu:fourth=000020moments},
and \ref{assu:asymptotic=000020beta}, it holds that
\[
\sup_{\bm{\beta}\in\Theta}\left|\sqrt{n}\left[\left\{ G_{n}\left(\bm{\beta};\hat{\bm{\beta}}_{n}\right)-g\left(\bm{\beta};\hat{\bm{\beta}}_{n}\right)\right\} -\left\{ G_{n}\left(\bm{\beta};\bm{\beta}_{0}\right)-g\left(\bm{\beta};\bm{\beta}_{0}\right)\right\} \right]\right|\stackrel{\mathbb{P}}{\longrightarrow}0\text{.}
\]
\end{prop}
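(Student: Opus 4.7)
The plan is to recognize the displayed quantity as a random empirical process and to invoke Theorem 3.13.4 of \citet{van2023weak}, whose hypotheses reduce to verifying (i) the class $\mathcal{F}=\{l_{\bm{\beta},\bm{\beta}'}:(\bm{\beta},\bm{\beta}')\in\Theta\times\Theta\}$ is $P$-Donsker, where $P$ denotes the joint law of $(\bm{X},Y)$; (ii) the random index $\hat{\bm{\beta}}_n$ is consistent for $\bm{\beta}_0$; and (iii) $\sup_{\bm{\beta}\in\Theta}\mathbb{E}\bigl[(l_{\bm{\beta},\hat{\bm{\beta}}_n}-l_{\bm{\beta},\bm{\beta}_0})^2\bigr]\stackrel{\mathbb{P}}{\longrightarrow}0$. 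These three checks constitute the whole of the argument.

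For (i), I would expand
\[
l_{\bm{\beta},\bm{\beta}'}(\bm{X},Y)=Y^{2}-2Y\bm{X}^{\top}\mathbf{D}(\bm{\beta}')\bm{\beta}+\bm{\beta}^{\top}\mathbf{D}(\bm{\beta}')\bm{X}\bm{X}^{\top}\mathbf{D}(\bm{\beta}')\bm{\beta},
\]
and use the $C^{2}$-regularity of each $t_{k}$ (Assumption \ref{assu:=000020thresholds}) together with the compactness of $\Theta$ to obtain uniform bounds on the derivatives of $(\bm{\beta},\bm{\beta}')\mapsto\mathbf{D}(\bm{\beta}')\bm{\beta}$. This yields a Lipschitz bound
\[
|l_{\bm{\beta}_{1},\bm{\beta}'_{1}}(\bm{X},Y)-l_{\bm{\beta}_{2},\bm{\beta}'_{2}}(\bm{X},Y)|\le F(\bm{X},Y)\,\|(\bm{\beta}_{1},\bm{\beta}'_{1})-(\bm{\beta}_{2},\bm{\beta}'_{2})\|,
\]
where the envelope $F$ is a polynomial of degree two in $\|\bm{X}\|$ and $|Y|$. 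Because $Y=\bm{X}^{\top}\bm{\beta}_{0}+\epsilon$, Assumption \ref{assu:fourth=000020moments} guarantees $\mathbb{E}[F^{2}]<\infty$, so $\mathcal{F}$ is a Lipschitz-parametric class indexed by a compact subset of $\mathbb{R}^{2p}$ and hence $P$-Donsker by standard bracketing arguments (cf.\ \citealp[Ch.\ 2.10]{van2023weak}).

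For (ii), Assumption \ref{assu:asymptotic=000020beta} combined with Slutsky's theorem gives $\hat{\bm{\beta}}_{n}\stackrel{\mathbb{P}}{\longrightarrow}\bm{\beta}_{0}$. For (iii), the same Lipschitz bound yields
\[
\sup_{\bm{\beta}\in\Theta}\mathbb{E}\bigl[(l_{\bm{\beta},\bm{\beta}'}-l_{\bm{\beta},\bm{\beta}_{0}})^{2}\bigr]\le\mathbb{E}[F^{2}]\,\|\bm{\beta}'-\bm{\beta}_{0}\|^{2},
\]
which vanishes in probability when $\bm{\beta}'=\hat{\bm{\beta}}_{n}$ by (ii). With (i)--(iii) in hand, Theorem 3.13.4 of \citet{van2023weak} delivers the stochastic equicontinuity asserted in the proposition.

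The main obstacle is (i): one must be attentive that the Lipschitz constant arising from differentiation of $\mathbf{D}(\bm{\beta}')\bm{\beta}$ is uniformly bounded on $\Theta\times\Theta$, which relies crucially on both the $C^{2}$-regularity of $t_{k}$ and the compactness of $\Theta$. Without either ingredient, the envelope $F$ may fail to be square-integrable and the Donsker conclusion would not follow from the fourth-moment assumption alone. Once this verification is carried out carefully, the remaining pieces of the argument reduce to routine bookkeeping.
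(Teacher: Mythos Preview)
Your proposal is correct and follows the same overall framework as the paper: both invoke \citet[Thm.~3.13.4]{van2023weak} and verify the Donsker property of the class $\{l_{\bm{\beta},\bm{\beta}'}:(\bm{\beta},\bm{\beta}')\in\Theta\times\Theta\}$ via a Lipschitz-in-parameter argument with a square-integrable envelope built from the fourth-moment assumption. The only substantive difference is in how the $L^{2}$-continuity condition $\bar{\Delta}_{n}=\sup_{\bm{\beta}}P\bigl[(l_{\bm{\beta},\hat{\bm{\beta}}_{n}}-l_{\bm{\beta},\bm{\beta}_{0}})^{2}\bigr]\stackrel{\mathbb{P}}{\to}0$ is established. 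The paper expands $\Delta_{n}(\bm{\beta})$ into three terms, shows pointwise convergence in probability, and then upgrades to uniform convergence via a stochastic-equicontinuity argument drawn from \citet[Thms.~22.9--22.10]{davidson2021stochastic}. Your route is more direct: you reuse the same Lipschitz envelope $F$ to bound $(l_{\bm{\beta},\bm{\beta}'}-l_{\bm{\beta},\bm{\beta}_{0}})^{2}\le F^{2}\|\bm{\beta}'-\bm{\beta}_{0}\|^{2}$ uniformly in $\bm{\beta}$, take $P$-expectation, and plug in $\hat{\bm{\beta}}_{n}$. This is shorter and avoids the auxiliary equicontinuity lemmas, at the minor cost of needing the Lipschitz bound to hold for $\bm{\beta}'$ in a set containing $\hat{\bm{\beta}}_{n}$ with probability tending to one---which follows from consistency and the fact that the bound extends to any fixed compact neighbourhood of $\bm{\beta}_{0}$.
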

By \citet[Thm. 3.13.4]{van2023weak}, it suffices to check that the
set 
\[
{\cal L}=\left\{ l_{\bm{\beta},\bm{\beta}^{\prime}}\left(\bm{X},Y\right)=\left\{ Y-\bm{X}^{\top}\bm{T}_{k}\left(\bm{\beta};\bm{\beta}^{\prime}\right)\right\} ^{2}:\left(\bm{\beta},\bm{\beta}^{\prime}\right)\in\Theta\times\Theta\right\} 
\]
is $\mathbb{P}$-Donsker and it holds that
\[
\bar{\Delta}_{n}=\sup_{\bm{\beta}\in\Theta}P\left[\left\{ l_{\bm{\beta},\hat{\bm{\beta}}_{n}}\left(\bm{X},Y\right)-l_{\bm{\beta},\bm{\beta}_{0}}\left(\bm{X},Y\right)\right\} ^{2}\right]\stackrel{\mathbb{P}}{\longrightarrow}0\text{,}
\]
where we use $P$ to denote the expectation only over $\left(\bm{X},Y\right)$
and not $\hat{\bm{\beta}}_{n}$. To verify that ${\cal L}$ is $\mathbb{P}$-Donsker,
we apply \citet[Prop. 9.79]{shapiro2021lectures} which requires that
we check the conditions that (i) $\mathbb{E}\left[l_{\bm{\beta}_{1},\bm{\beta}_{1}^{\prime}}\left(\bm{X},Y\right)^{2}\right]<\infty$
and (ii) there exists a function $\bar{L}:\mathbb{R}^{p+1}\to\mathbb{R}$
with $\mathbb{E}\left[\bar{L}\left(\bm{X},Y\right)^{2}\right]<\infty$,
where 
\[
\left|l_{\bm{\beta}_{1},\bm{\beta}_{1}^{\prime}}\left(\bm{X},Y\right)-l_{\bm{\beta}_{2},\bm{\beta}_{2}^{\prime}}\left(\bm{X},Y\right)\right|\le\bar{L}\left(\bm{X},Y\right)\left[\left\Vert \bm{\beta}_{1}-\bm{\beta}_{2}\right\Vert +\left\Vert \bm{\beta}_{1}^{\prime}-\bm{\beta}_{2}^{\prime}\right\Vert \right]\text{,}
\]
for every $\bm{\beta}_{1},\bm{\beta}_{2},\bm{\beta}_{1}^{\prime},\bm{\beta}_{2}^{\prime}\in\Theta$.

To check (i), we start by writing
\begin{align*}
l_{\bm{\beta},\bm{\beta}^{\prime}}\left(\bm{X},Y\right) & =\left\{ Y-\bm{X}^{\top}\bm{T}_{k}\left(\bm{\beta};\bm{\beta}^{\prime}\right)\right\} ^{2}\\
 & =\left\{ \bm{X}^{\top}\left[\bm{\beta}_{0}-\bm{T}_{k}\left(\bm{\beta};\bm{\beta}^{\prime}\right)\right]+\epsilon\right\} ^{2}\\
 & =\left\{ \sum_{j=1}^{p}X_{j}\beta_{0j}-\sum_{j=1}^{p}X_{j}t_{k}\left(\beta_{j}^{\prime}\right)\beta_{j}+\epsilon\right\} ^{2}
\end{align*}
and thus
\begin{align*}
\mathbb{E}\left[l_{\bm{\beta}_{1},\bm{\beta}_{1}^{\prime}}\left(\bm{X},Y\right)^{2}\right] & =\mathbb{E}\left[\left\{ \bm{X}^{\top}\left[\bm{\beta}_{0}-\bm{T}_{k}\left(\bm{\beta}_{1};\bm{\beta}_{1}^{\prime}\right)\right]+\epsilon\right\} ^{4}\right]\\
 & \le\left(1+p\right)\left\{ \mathbb{E}\left[\left|\epsilon\right|^{4}\right]+\sum_{j=1}^{p}\left(\left|\beta_{0j}\right|+t_{k}\left(\beta_{1j}^{\prime}\right)\left|\beta_{1j}\right|\right)^{4}\mathbb{E}\left[\left|X_{j}\right|^{4}\right]\right\} \text{,}\\
 & \le\left(1+p\right)\left\{ \mathbb{E}\left[\left|\epsilon\right|^{4}\right]+16B^{4}\sum_{j=1}^{p}\mathbb{E}\left[\left|X_{j}\right|^{4}\right]\right\} \text{,}
\end{align*}
where $B=\sup_{\bm{\beta}\in\Theta}\left\Vert \bm{\beta}\right\Vert _{1}$,
since $0\le t_{k}\le1$. Thus, under the moment condition of Assumption
\ref{assu:fourth=000020moments}, (i) is verified. Next, observe that
the derivative of $l$ in $\left(\bm{\beta},\bm{\beta}^{\prime}\right)$
is
\[
\nabla l_{\bm{\beta},\bm{\beta}^{\prime}}\left(\bm{X},Y\right)=-2\left(\sum_{j=1}^{p}X_{j}\beta_{0j}-\sum_{j=1}^{p}X_{j}t_{k}\left(\beta_{j}^{\prime}\right)\beta_{j}+\epsilon\right)\left(\dots,X_{j}{t_{k}}\left(\beta_{j}^{\prime}\right),\dots,X_{j}\beta_{j}\dot{t_{k}}\left(\beta_{j}^{\prime}\right),\dots\right)^{\top}\text{,}
\]
where $\dot{t_{k}}$ is the derivative of $t_{k}$, which exists and
is continuous due to Assumption \ref{assu:=000020thresholds}. Thus,
we can write
\begin{equation}
\left\Vert \nabla l_{\bm{\beta},\bm{\beta}^{\prime}}\left(\bm{X},Y\right)\right\Vert _{1}\le C\sum_{j=1}^{p}\sum_{j^{\prime}=1}^{p}a_{j,j^{\prime}}\left(\bm{\beta},\bm{\beta}^{\prime}\right)\left|X_{j}X_{j^{\prime}}\right|+\sum_{j=1}^{p}b_{j}\left(\bm{\beta},\bm{\beta}^{\prime}\right)\left|X_{j}\epsilon\right|\text{,}\label{eq:gradient=000020expansion}
\end{equation}
for some large constant $C<\infty$, and continuous functions $a_{j,j^{\prime}},b_{j}:\Theta\times\Theta\to\mathbb{R}_{\ge0}$.
Since $\Theta$ is compact, we can therefore bound (\ref{eq:gradient=000020expansion})
by the supremum over $\Theta\times\Theta$ which exists and is finite
by the Weierstrass extreme value theorem. Thus, there is a large constant
$C<\infty$ such that
\[
\left\Vert \nabla l_{\bm{\beta},\bm{\beta}^{\prime}}\left(\bm{X},Y\right)\right\Vert _{1}\le C\left\{ \sum_{j=1}^{p}\sum_{j^{\prime}=1}^{p}\left|X_{j}X_{j^{\prime}}\right|+\sum_{j=1}^{p}\left|X_{j}\epsilon\right|\right\} \text{.}
\]
But by the mean-value theorem, since $l_{\bm{\beta},\bm{\beta}^{\prime}}\left(Y,\bm{X}\right)$
is continuously differentiable and $\Theta$ is compact, we can write
\[
\left|l_{\bm{\beta}_{1},\bm{\beta}_{1}^{\prime}}\left(\bm{X},Y\right)-l_{\bm{\beta}_{2},\bm{\beta}_{2}^{\prime}}\left(\bm{X},Y\right)\right|\le\bar{L}\left(\bm{X},Y\right)\left[\left\Vert \bm{\beta}_{1}-\bm{\beta}_{2}\right\Vert +\left\Vert \bm{\beta}_{1}^{\prime}-\bm{\beta}_{2}^{\prime}\right\Vert \right]\text{,}
\]
for each $\bm{\beta}_{1},\bm{\beta}_{2},\bm{\beta}_{1}^{\prime},\bm{\beta}_{2}^{\prime}\in\Theta$,
where $\bar{L}\left(\bm{X},Y\right)=C\left\Vert \nabla l_{\bm{\beta},\bm{\beta}^{\prime}}\left(\bm{X},Y\right)\right\Vert _{1}$,
for some large $C<\infty$, and therefore we can take 
\[
\bar{L}\left(\bm{X},Y\right)=C\left\{ \sum_{j=1}^{p}\sum_{j^{\prime}=1}^{p}\left|X_{j}X_{j^{\prime}}\right|+\sum_{j=1}^{p}\left|X_{j}\epsilon\right|\right\} \text{.}
\]
Clearly, we have that for a constant $C<\infty$,
\[
\bar{L}\left(\bm{X},Y\right)^{2}\le C\left\{ \sum_{j=1}^{p}\sum_{j^{\prime}=1}^{p}\left|X_{j}X_{j^{\prime}}\right|^{2}+\sum_{j=1}^{p}\left|X_{j}\epsilon\right|^{2}\right\} \text{.}
\]
Assumption \ref{assu:fourth=000020moments} then implies that 
\[
\mathbb{E}\left[\bar{L}\left(\bm{X},Y\right)^{2}\right]\le C\left\{ \sum_{j=1}^{p}\sum_{j^{\prime}=1}^{p}\mathbb{E}\left[\left|X_{j}X_{j^{\prime}}\right|^{2}\right]+\sum_{j=1}^{p}\left[\mathbb{E}\left|X_{j}\epsilon\right|^{2}\right]\right\} <\infty\text{,}
\]
and thus (ii) holds, implying that ${\cal L}$ is $\mathbb{P}$-Donsker.

Next, to check that $\bar{\Delta}_{n}\overset{\mathbb{P}}{\longrightarrow}0$,
we define
\begin{align*}
\Delta_{n}\left(\bm{\beta}\right) & =P\left[\left\{ l_{\bm{\beta},\hat{\bm{\beta}}_{n}}\left(\bm{X},Y\right)-l_{\bm{\beta},\bm{\beta}_{0}}\left(\bm{X},Y\right)\right\} ^{2}\right]\text{.}
\end{align*}
We require pair of results from \citet[Thms. 22.9 and 22.10]{davidson2021stochastic},
which we specify in the context of our problem.
\begin{lem}
\label{lem:If=000020SE=000020then=000020uniform}If $\Delta_{n}\left(\bm{\beta}\right)\overset{\mathbb{P}}{\longrightarrow}0$
for each $\bm{\beta}\in\Theta$ and $\left(\Delta_{n}\right)$ is
stochastic equicontinuous, then $\bar{\Delta}_{n}\overset{\mathbb{P}}{\longrightarrow}0$.
\end{lem}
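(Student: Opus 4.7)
The plan is to use a standard compactness argument that reduces uniform control of $\Delta_n$ on $\Theta$ to control at finitely many points, using stochastic equicontinuity to handle fluctuations between those points. Since $\Delta_n(\bm{\beta})\ge 0$ by construction (it is an expectation of a squared quantity), I only need to bound the supremum from above, not the absolute value.

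First I would fix $\varepsilon,\eta>0$ and invoke stochastic equicontinuity to choose $\delta>0$ and $N_0\in\mathbb{N}$ such that, for all $n\ge N_0$,
\[
\mathbb{P}\Bigl(\sup_{\|\bm{\beta}-\bm{\beta}'\|<\delta}\bigl|\Delta_n(\bm{\beta})-\Delta_n(\bm{\beta}')\bigr|>\varepsilon/2\Bigr)<\eta/2.
\]
Next I would use compactness of $\Theta$ to extract a finite $\delta$-net $\bm{\beta}_1,\dots,\bm{\beta}_M\in\Theta$ so that the open balls of radius $\delta$ centered at these points cover $\Theta$. The pointwise convergence hypothesis $\Delta_n(\bm{\beta}_j)\stackrel{\mathbb{P}}{\to}0$ then yields, for each $j\in[M]$, an integer $N_j$ such that $\mathbb{P}(\Delta_n(\bm{\beta}_j)>\varepsilon/2)<\eta/(2M)$ for all $n\ge N_j$.

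The final step is a union bound. For any $\bm{\beta}\in\Theta$, pick $j(\bm{\beta})$ with $\|\bm{\beta}-\bm{\beta}_{j(\bm{\beta})}\|<\delta$, and write
\[
\Delta_n(\bm{\beta}) \le \Delta_n(\bm{\beta}_{j(\bm{\beta})}) + \bigl|\Delta_n(\bm{\beta})-\Delta_n(\bm{\beta}_{j(\bm{\beta})})\bigr|.
\]
Taking suprema and applying the two controls above, for $n\ge\max(N_0,N_1,\dots,N_M)$,
\[
\mathbb{P}\Bigl(\sup_{\bm{\beta}\in\Theta}\Delta_n(\bm{\beta})>\varepsilon\Bigr)\le \sum_{j=1}^M\mathbb{P}\bigl(\Delta_n(\bm{\beta}_j)>\varepsilon/2\bigr)+\mathbb{P}\Bigl(\sup_{\|\bm{\beta}-\bm{\beta}'\|<\delta}\bigl|\Delta_n(\bm{\beta})-\Delta_n(\bm{\beta}')\bigr|>\varepsilon/2\Bigr)<\eta,
\]
which is exactly $\bar{\Delta}_n\stackrel{\mathbb{P}}{\to}0$.

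There is no serious obstacle here: the argument is the standard ``pointwise plus equicontinuous implies uniform'' reduction on a compact set, and indeed the authors attribute the statement to \citet[Thms.~22.9 and 22.10]{davidson2021stochastic}. The only point that requires a brief remark is that $\Delta_n$ is nonnegative, so taking the supremum of $\Delta_n$ itself (rather than $|\Delta_n|$) suffices; this is immediate from the definition $\Delta_n(\bm{\beta})=P[(l_{\bm{\beta},\hat{\bm{\beta}}_n}-l_{\bm{\beta},\bm{\beta}_0})^2]$.
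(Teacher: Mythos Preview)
Your argument is correct and is the standard compactness-plus-equicontinuity reduction. Note, however, that the paper does not actually prove this lemma: it is stated without proof as a specialization of \citet[Thms.~22.9 and 22.10]{davidson2021stochastic}, so there is no in-paper argument to compare against beyond the citation you already mention.
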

\begin{lem}
\label{lem:Equicont}For sufficiently large $n\in\mathbb{N}$, if
\[
\left|\Delta_{n}\left(\bm{\beta}\right)-\Delta_{n}\left(\bm{\beta}^{\prime}\right)\right|\le\bar{D}_{n}\left\Vert \bm{\beta}-\bm{\beta}^{\prime}\right\Vert \text{,}
\]
for each $\bm{\beta},\bm{\beta}^{\prime}\in\Theta$, where $\bar{D}_{n}=O_{\mathbb{P}}\left(1\right)$,
then $\left(\Delta_{n}\right)$ is stochastic equicontinuous.
\end{lem}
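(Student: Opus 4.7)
The plan is to unpack the definition of stochastic equicontinuity adopted by \citet{davidson2021stochastic} and reduce the statement to a one-line application of the hypothesised Lipschitz bound together with the $O_{\mathbb{P}}(1)$ property of $\bar{D}_n$. Recall that $(\Delta_n)$ is stochastically equicontinuous on $\Theta$ if, for every $\varepsilon>0$ and $\eta>0$, there exists $\delta>0$ such that
\[
\limsup_{n\to\infty}\mathbb{P}\left(\sup_{\bm{\beta},\bm{\beta}'\in\Theta:\ \|\bm{\beta}-\bm{\beta}'\|<\delta}\left|\Delta_n(\bm{\beta})-\Delta_n(\bm{\beta}')\right|>\varepsilon\right)<\eta\text{.}
\]
So the task reduces to producing, given $\varepsilon$ and $\eta$, an explicit $\delta$ for which the displayed probability is eventually bounded by $\eta$.

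The construction is immediate. Fix $\varepsilon,\eta>0$. Since $\bar{D}_n=O_{\mathbb{P}}(1)$, choose $M>0$ such that $\mathbb{P}(\bar{D}_n>M)<\eta$ for all sufficiently large $n$. Set $\delta=\varepsilon/(2M)$. On the event $\{\bar{D}_n\le M\}$, the Lipschitz hypothesis immediately gives
\[
\sup_{\|\bm{\beta}-\bm{\beta}'\|<\delta}\left|\Delta_n(\bm{\beta})-\Delta_n(\bm{\beta}')\right|\le \bar{D}_n\,\delta\le M\delta=\varepsilon/2<\varepsilon\text{,}
\]
so the supremum exceeds $\varepsilon$ only on the complementary event, yielding
\[
\mathbb{P}\left(\sup_{\|\bm{\beta}-\bm{\beta}'\|<\delta}\left|\Delta_n(\bm{\beta})-\Delta_n(\bm{\beta}')\right|>\varepsilon\right)\le \mathbb{P}(\bar{D}_n>M)<\eta
\]
for all sufficiently large $n$, verifying the defining property of stochastic equicontinuity.

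There is essentially no obstacle here beyond bookkeeping; the substance of the result is that a deterministic Lipschitz modulus-of-continuity bound with a probabilistically bounded constant automatically upgrades to a probabilistic modulus-of-continuity bound. The only minor subtlety is to ensure compatibility with Davidson's precise definition (strict versus non-strict inequalities in the sup and in the tolerance), which is handled by the slack factor of $2$ in the choice $\delta=\varepsilon/(2M)$.
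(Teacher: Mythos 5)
Your argument is correct. Note, though, that the paper does not prove this lemma at all: it is stated as an imported fact, specialized from \citet[Thms.\ 22.9 and 22.10]{davidson2021stochastic}, so there is no in-paper proof to compare against. What you have written is essentially the standard proof of the Lipschitz sufficient condition for stochastic equicontinuity that underlies the cited textbook result: pick $M$ from the tightness of $\bar{D}_{n}$, set $\delta=\varepsilon/(2M)$, and bound the modulus-of-continuity event by the event $\{\bar{D}_{n}>M\}$. The only points worth flagging are cosmetic: the measurability of the supremum over pairs is unproblematic here because $\Delta_{n}$ is continuous on the compact $\Theta$ (so the supremum can be taken over a countable dense set), and your two-parameter $(\varepsilon,\eta)$ formulation of stochastic equicontinuity matches the definition used in the cited reference, so the argument goes through as stated.
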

We begin by checking the first condition of Lemma \ref{lem:If=000020SE=000020then=000020uniform}.
Let us write:
\[
\Delta_{n}\left(\bm{\beta}\right)=P\left[l_{\bm{\beta},\hat{\bm{\beta}}_{n}}\left(\bm{X},Y\right)^{2}\right]-2P\left[l_{\bm{\beta},\hat{\bm{\beta}}_{n}}\left(\bm{X},Y\right)l_{\bm{\beta},\bm{\beta}_{0}}\left(\bm{X},Y\right)\right]+P\left[l_{\bm{\beta},\bm{\beta}_{0}}\left(\bm{X},Y\right)^{2}\right]\text{.}
\]
But since $P\left[l_{\bm{\beta},\bm{\beta}^{\prime}}\left(\bm{X},Y\right)^{2}\right]=\mathbb{E}\left[l_{\bm{\beta},\bm{\beta}^{\prime}}\left(\bm{X},Y\right)^{2}\right]<\infty$
uniformly in $\left(\bm{\beta},\bm{\beta}^{\prime}\right)$, as already
checked, we have that $P\left[l_{\bm{\beta},\bm{\beta}^{\prime}}\left(\bm{X},Y\right)^{2}\right]$
is a continuous function in $\left(\bm{\beta},\bm{\beta}^{\prime}\right)$,
by \citet[Thm. 9.55]{shapiro2021lectures}. We can similarly check
that the middle term is is continuous. Since $\hat{\bm{\beta}}_{n}$
is a consistent estimator by Assumption \ref{assu:asymptotic=000020beta},
we have that 
\[
\Delta_{n}\left(\bm{\beta}\right)\overset{\mathbb{P}}{\longrightarrow}P\left[l_{\bm{\beta},\bm{\beta}_{0}}\left(\bm{X},Y\right)^{2}\right]-2P\left[l_{\bm{\beta},\bm{\beta}_{0}}\left(\bm{X},Y\right)l_{\bm{\beta},\bm{\beta}_{0}}\left(\bm{X},Y\right)\right]+P\left[l_{\bm{\beta},\bm{\beta}_{0}}\left(\bm{X},Y\right)^{2}\right]=0\text{,}
\]
for each $\bm{\beta}$, as required. To apply Lemma \ref{lem:Equicont},
we observe that we can write
\[
\left|\Delta_{n}\left(\bm{\beta}\right)-\Delta_{n}\left(\bm{\beta}^{\prime}\right)\right|\le\text{(2A)}+\text{(2B)}+\text{(2C)}\text{,}
\]
where
\[
\text{(2A)}=\left|P\left[l_{\bm{\beta},\hat{\bm{\beta}}_{n}}\left(\bm{X},Y\right)^{2}\right]-P\left[l_{\bm{\beta}^{\prime},\hat{\bm{\beta}}_{n}}\left(\bm{X},Y\right)^{2}\right]\right|\text{,}
\]
\[
\text{(2B)}=2\left|P\left[l_{\bm{\beta},\hat{\bm{\beta}}_{n}}\left(\bm{X},Y\right)l_{\bm{\beta}^{\prime},\bm{\beta}_{0}}\left(\bm{X},Y\right)\right]-P\left[l_{\bm{\beta}^{\prime},\hat{\bm{\beta}}_{n}}\left(\bm{X},Y\right)l_{\bm{\beta}^{\prime},\bm{\beta}_{0}}\left(\bm{X},Y\right)\right]\right|\text{,}
\]
\[
\text{(2C)}=\left|P\left[l_{\bm{\beta},\bm{\beta}_{0}}\left(\bm{X},Y\right)^{2}\right]-P\left[l_{\bm{\beta}^{\prime},\bm{\beta}_{0}}\left(\bm{X},Y\right)^{2}\right]\right|\text{.}
\]
We can thus verify the condition of Lemma \ref{lem:Equicont} for
each component. We will do so using the mean value theorem. In the
case of (2A) and (2C), this requires that $P\left[l_{\bm{\beta},\bm{\beta}^{\prime}}\left(\bm{X},Y\right)^{2}\right]$
is differentiable in $\bm{\beta}$, which holds if it satisfies the
Lipschitz condition:
\[
\left|l_{\bm{\beta}_{1},\bm{\beta}^{\prime}}\left(\bm{X},Y\right)^{2}-l_{\bm{\beta}_{2},\bm{\beta}^{\prime}}\left(\bm{X},Y\right)^{2}\right|\le C\left(\bm{X},Y\right)\left\Vert \bm{\beta}_{1}-\bm{\beta}_{2}\right\Vert \text{,}
\]
for each $\bm{\beta}_{1},\bm{\beta}_{2},\bm{\beta}^{\prime}\in\Theta$,
where $\mathbb{E}\left[C\left(\bm{X},Y\right)\right]<\infty$ and
if $\text{\ensuremath{l_{\bm{\beta},\bm{\beta}^{\prime}}\left(\bm{X},Y\right)^{2}}}$
is differentiable for each $\left(\bm{X},Y\right)$ (cf. \citealt[Thm. 9.56]{shapiro2021lectures}).
Differentiability follows since $\text{\ensuremath{l_{\bm{\beta},\bm{\beta}}\left(\bm{X},Y\right)^{2}}}$
is quadratic in $\bm{\beta}$. To verify the Lipschitz condition,
we follow the same approach as when checking the $\mathbb{P}$-Donsker
property and write
\[
\text{\ensuremath{l_{\bm{\beta},\bm{\beta}^{\prime}}\left(\bm{X},Y\right)^{2}}}=\left\{ \sum_{j=1}^{p}X_{j}\beta_{0j}-\sum_{j=1}^{p}X_{j}t_{k}\left(\beta_{j}^{\prime}\right)\beta_{j}+\epsilon\right\} ^{4}\text{,}
\]
where the derivative with respect to $\bm{\beta}$ is
\[
\nabla_{1}\left[\text{\ensuremath{l_{\bm{\beta},\bm{\beta}^{\prime}}\left(\bm{X},Y\right)^{2}}}\right]=\left[4X_{j}t_{k}\left(\beta_{j}^{\prime}\right)\left(\sum_{j=1}^{p}X_{j}\beta_{0j}-\sum_{j=1}^{p}X_{j}t_{k}\left(\beta_{j}^{\prime}\right)\beta_{j}+\epsilon\right)^{3}\right]_{j\in\left[p\right]}\text{,}
\]
therefore, since $\Theta$ is compact, and the highest power of $X_{j}$
and $\epsilon$ is $4$, the extreme value theorem and Holder's inequality
then yields a positive function $a:\mathbb{R}^{2+p}\to\mathbb{R}_{\ge0}$
that is linear in its first two inputs, such that
\[
\left\Vert \nabla_{1}\left[\text{\ensuremath{l_{\bm{\beta},\bm{\beta}^{\prime}}\left(\bm{X},Y\right)^{2}}}\right]\right\Vert _{1}\le a\left(\left\Vert \bm{X}\right\Vert ^{4},\epsilon^{4},\bm{\beta}^{\prime}\right)\text{.}
\]
The mean value theorem then suggests that we can take $C\left(\bm{X},Y\right)=C\times a\left(\left\Vert \bm{X}\right\Vert ^{4},\epsilon^{4},\bm{\beta}^{\prime}\right)$,
for some $C<\infty$, where $\mathbb{E}\left[a\left(\left\Vert \bm{X}\right\Vert _{}^{4},\epsilon^{4},\bm{\beta}^{\prime}\right)\right]<\infty$,
as required, by Assumption \ref{assu:fourth=000020moments}. \citet[Thm. 9.56]{shapiro2021lectures}
then implies that 
\begin{align}
\nabla_{1}P\left[l_{\bm{\beta},\bm{\beta}^{\prime}}\left(\bm{X},Y\right)^{2}\right] & =P\left[\nabla_{1}\left\{ l_{\bm{\beta},\bm{\beta}^{\prime}}\left(\bm{X},Y\right)^{2}\right\} \right]\nonumber \\
 & =\left[4P\left\{ X_{j}t_{k}\left(\beta_{j}^{\prime}\right)\left(\sum_{j=1}^{p}X_{j}\beta_{0j}-\sum_{j=1}^{p}X_{j}t_{k}\left(\beta_{j}^{\prime}\right)\beta_{j}+\epsilon\right)^{3}\right\} \right]_{j\in\left[p\right]}\text{.}\label{eq:=000020nabla=000020P=000020l2}
\end{align}
The extreme value theorem and Holder's inequality again yield a positive
continuous function $a:\mathbb{R}^{2+p}\to\mathbb{R}_{\ge0}$ , such
that
\[
\left\Vert \nabla_{1}P\left[l_{\bm{\beta},\bm{\beta}^{\prime}}\left(\bm{X},Y\right)^{2}\right]\right\Vert _{1}\le a\left(\mathbb{E}\left\Vert \bm{X}\right\Vert ^{4},\mathbb{E}\left[\epsilon^{4}\right],\bm{\beta}^{\prime}\right)\text{,}
\]
where the continuity is due to Assumption \ref{assu:=000020thresholds}.
By the mean value theorem and the compactness of $\Theta$, we can
choose
\[
\bar{D}=C\times\sup_{\bm{\beta}^{\prime}\in\Theta}a\left(\mathbb{E}\left\Vert \bm{X}\right\Vert ^{4},\mathbb{E}\left[\epsilon^{4}\right],\bm{\beta}^{\prime}\right)<\infty\text{,}
\]
such that 
\[
\text{(2A)},\text{(2C)}\le\bar{D}\left\Vert \bm{\beta}-\bm{\beta}^{\prime}\right\Vert \text{,}
\]
for each $\bm{\beta},\bm{\beta}^{\prime}\in\Theta$, as required.
The Lipschitz condition for (2B) follows similarly, thus Lemma \ref{lem:Equicont}
yields the equicontinuity of $\left(\Delta_{n}\right)$ and therefore
$\bar{\Delta}_{n}\overset{\mathbb{P}}{\longrightarrow}0$ follows
from Lemma \ref{lem:If=000020SE=000020then=000020uniform}, thus proving
Proposition \ref{prop:1A}.

Next, we verify the weak convergence of (1B). Indeed, since ${\cal L}$
is $\mathbb{P}$-Donsker, and
\[
{\cal L}_{\bm{\beta}_{0}}=\left\{ l_{\bm{\beta},\bm{\beta}_{0}}\left(\bm{X},Y\right)=\left\{ Y-\bm{X}^{\top}\bm{T}_{k}\left(\bm{\beta};\bm{\beta}_{0}\right)\right\} ^{2}:\bm{\beta}\in\Theta\right\} \subset{\cal L}\text{,}
\]
we have that ${\cal L}_{\bm{\beta}_{0}}$ is also $\mathbb{P}$-Donsker
by \citet[Thm. 2.10.1]{van2023weak}. Thus, by definition of a $\mathbb{P}$-Donsker
class (cf. \citealp[Sec. 2.1]{van2023weak}), we have our required
result.
\begin{prop}
Under Assumptions \ref{assu:=000020thresholds} and \ref{assu:fourth=000020moments},
$\text{(1B)}\rightsquigarrow R$, where $R:\Omega\to\ell^{\infty}\left(\Theta\right)$.
\end{prop}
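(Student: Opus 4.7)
The plan is to recognize that expression (1B), viewed as a functional of $\bm{\beta}\in\Theta$, is precisely the standard empirical process $\sqrt{n}(P_{n}-P)$ indexed by the loss subfamily $\mathcal{L}_{\bm{\beta}_{0}}=\{l_{\bm{\beta},\bm{\beta}_{0}}(\bm{X},Y):\bm{\beta}\in\Theta\}$, obtained from the larger class $\mathcal{L}$ by freezing the nuisance argument at the true signal $\bm{\beta}_{0}$. Consequently, the entire proposition reduces to verifying that $\mathcal{L}_{\bm{\beta}_{0}}$ is a $\mathbb{P}$-Donsker class; once this is in hand, the definition of the Donsker property (see \citet[Sec. 2.1]{van2023weak}) immediately delivers weak convergence of the indexed empirical process to a tight centered Gaussian limit in $\ell^{\infty}(\mathcal{L}_{\bm{\beta}_{0}})$.

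The key step is to exploit the inclusion $\mathcal{L}_{\bm{\beta}_{0}}\subset\mathcal{L}$, together with the fact that $\mathcal{L}$ has already been shown to be $\mathbb{P}$-Donsker in the course of proving Proposition \ref{prop:1A}. That verification absorbed both Assumption \ref{assu:=000020thresholds} (the smoothness of $t_{k}$, used in bounding the gradient of $l_{\bm{\beta},\bm{\beta}'}$) and Assumption \ref{assu:fourth=000020moments} (finite fourth moments on $\bm{X}$ and $\epsilon$, needed to secure a square-integrable Lipschitz envelope $\bar{L}$ and the moment bound $\mathbb{E}[l_{\bm{\beta},\bm{\beta}'}(\bm{X},Y)^{2}]<\infty$). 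The Donsker property is preserved under passage to arbitrary subclasses by \citet[Thm. 2.10.1]{van2023weak}, which has already been invoked in the earlier proof, so $\mathcal{L}_{\bm{\beta}_{0}}$ is itself $\mathbb{P}$-Donsker without any further moment or regularity checks.

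To close the argument, I would transfer weak convergence from $\ell^{\infty}(\mathcal{L}_{\bm{\beta}_{0}})$ to $\ell^{\infty}(\Theta)$ along the natural parametrization $\bm{\beta}\mapsto l_{\bm{\beta},\bm{\beta}_{0}}$, defining $R(\bm{\beta})=\mathbb{G}(l_{\bm{\beta},\bm{\beta}_{0}})$, where $\mathbb{G}$ denotes the $P$-Brownian bridge on $\mathcal{L}_{\bm{\beta}_{0}}$. By linearity $R$ is a centered Gaussian process, and by tightness of $\mathbb{G}$ it takes values in $\ell^{\infty}(\Theta)$. I anticipate no substantive obstacle here: the genuine technical content, namely the uniform entropy and moment control underpinning the Donsker property, has been fully discharged in the proof of Proposition \ref{prop:1A}, so all that remains is a brief appeal to the hereditary stability of Donsker classes and to the definition of Donsker convergence.
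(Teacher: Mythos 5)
Your proposal is correct and follows essentially the same route as the paper: identify (1B) as the empirical process indexed by the subclass $\mathcal{L}_{\bm{\beta}_{0}}\subset\mathcal{L}$, inherit the $\mathbb{P}$-Donsker property from $\mathcal{L}$ (already established in the proof of Proposition \ref{prop:1A} under Assumptions \ref{assu:=000020thresholds} and \ref{assu:fourth=000020moments}) via \citet[Thm. 2.10.1]{van2023weak}, and conclude by the definition of a Donsker class. The only addition beyond the paper's argument is your explicit remark on transferring convergence from $\ell^{\infty}(\mathcal{L}_{\bm{\beta}_{0}})$ to $\ell^{\infty}(\Theta)$ through the parametrization $\bm{\beta}\mapsto l_{\bm{\beta},\bm{\beta}_{0}}$, which the paper leaves implicit.
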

It remains to show that $\text{(1C)}\rightsquigarrow\bm{Z}^{\top}\nabla_{2}g(\cdot,\bm{\beta}_0)$.
To this end, we follow the suggestion from \citet[Sec. 3.13]{van2023weak}
and apply the delta method, to the function $\bm{\beta}^{\prime}\mapsto g\left(\cdot;\bm{\beta}^{\prime}\right)\in\ell^{\infty}\left(\Theta\right)$.
We will again use the directional functional delta method from \citet[Fact 3.2]{westerhout2024asymptotic}. 

We first obtain the Hadamard directional derivative of $g\left(\cdot;\bm{\beta}^{\prime}\right)$
in $\bm{\beta}^{\prime}$ and arbitrary direction $\bm{\eta}\in\mathbb{R}^{p}$.
We note that it is easy to check that for any compact set $\Theta^{\prime}\subset\mathbb{R}^{p}$,
the Lipschitz condition 
\[
\left\Vert g\left(\cdot;\bm{\beta}\right)-g\left(\cdot;\bm{\beta}^{\prime}\right)\right\Vert _{\infty}\le C\left\Vert \bm{\beta}-\bm{\beta}^{\prime}\right\Vert \text{,}
\]
is satisfied, for some $C>0$ and every $\bm{\beta},\bm{\beta}^{\prime}\in\Theta^{\prime}$.
Here the supremum norm is taken over the compact domain $\Theta$.
As such, $g\left(\cdot;\bm{\beta}^{\prime}\right)$ is locally Lipschitz
and the Hadamard and Gateau derivatives coincide (cf. \citealp[Prop. 5.9]{penot2016analysis}).

For brevity, we write
\[
\bm{t}\left(\bm{\beta}\right)=\left[t\left(\beta_{j}\right)\right]_{j\in\left[p\right]}\text{,}
\]
where $t=t_{k}$, for the same fixed $k$ as in $g$. Under Assumptions
\ref{assu:=000020thresholds} and \ref{assu:fourth=000020moments},
we have
\[
\nabla_{2}g\left(\bm{\beta};\bm{\beta}^{\prime}\right)=2\left[\mathbb{E}Y-\left\{ \mathbb{E}\bm{X}\right\} ^{\top}\mathbf{D}\left(\bm{\beta}^{\prime}\right)\bm{\beta}\right]\nabla\bm{t}\left(\bm{\beta}^{\prime}\right)\text{,}
\]
for each $\bm{\beta}\in\Theta$, where the derivative is taken with
respect to $\bm{\beta}^{\prime}$. By definition, $\nabla_{2}g\left(\cdot;\bm{\beta}^{\prime}\right)^{\top}\bm{\eta}$
is the Gateau, and thus Hadamard derivative in the direction $\bm{\eta}\in\mathbb{R}^{p}$
if
\[
\lim_{s\searrow0}\left\Vert \frac{g\left(\cdot;\bm{\beta}^{\prime}+s\bm{\eta}\right)-g\left(\cdot;\bm{\beta}^{\prime}\right)}{s}-\nabla g\left(\cdot;\bm{\beta}^{\prime}\right)^{\top}\bm{\eta}\right\Vert _{\infty}=0\text{.}
\]
By Taylor's theorem, for each $\bm{\beta},\bm{\beta}^{\prime}\in\Theta$,
it holds that 
\begin{equation}
\frac{g\left(\bm{\beta};\bm{\beta}^{\prime}+s\bm{\eta}\right)-g\left(\bm{\beta};\bm{\beta}^{\prime}\right)}{s}-\nabla g\left(\bm{\beta};\bm{\beta}^{\prime}\right)^{\top}\bm{\eta}=s\bm{\eta}^{\top}\mathbf{H}\left(\bm{\beta};\bar{\bm{\beta}}\right)\bm{\eta}\text{,}\label{eq:=000020Taylor=000020theorem}
\end{equation}
for some $\bar{\bm{\beta}}\in\Theta$, where $\mathbf{H}\left(\bm{\beta};\bar{\bm{\beta}}\right)$
is the Hessian of $g\left(\bm{\beta};\bm{\beta}^{\prime}\right)$
in $\bm{\beta}^{\prime}$, where the righthand side of (\ref{eq:=000020Taylor=000020theorem})
is continuous in both $\left(\bm{\beta},\bar{\bm{\beta}}\right)\in\Theta\times\Theta$,
for each $s$ and $\bm{\eta}$, by Assumptions \ref{assu:=000020thresholds}.
Thus,
\[
\left\Vert \frac{g\left(\cdot;\bm{\beta}^{\prime}+s\bm{\eta}\right)-g\left(\cdot;\bm{\beta}^{\prime}\right)}{s}-\nabla g\left(\cdot;\bm{\beta}^{\prime}\right)^{\top}\bm{\eta}\right\Vert _{\infty}\le s\sup_{\bar{\bm{\beta}}\in\Theta}\left\Vert \bm{\eta}^{\top}\mathbf{H}\left(\cdot;\bar{\bm{\beta}}\right)\bm{\eta}\right\Vert _{\infty}\to0
\]
as $s\searrow0$, by the extreme value theorem as required. Since
$\sqrt{n}\left(\hat{\bm{\beta}}_{n}-\bm{\beta}_{0}\right)\rightsquigarrow\bm{Z}$,
from Assumption \ref{assu:asymptotic=000020beta}, the delta method
of \citet[Fact 3.2]{westerhout2024asymptotic} then yields the following
result, which completes the proof.
\begin{prop}
Under Assumptions \ref{assu:=000020thresholds}, \ref{assu:fourth=000020moments},
and \ref{assu:asymptotic=000020beta}, it holds that $\text{(1C)}\rightsquigarrow\bm{Z}^{\top}\nabla_{2}g(\cdot,\bm{\beta}_0)$.
\end{prop}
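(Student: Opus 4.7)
The plan is to realize $\text{(1C)}=\sqrt{n}\{g(\cdot;\hat{\bm\beta}_n)-g(\cdot;\bm\beta_0)\}$ as the image of $\sqrt{n}(\hat{\bm\beta}_n-\bm\beta_0)$ under the map $T:\mathbb{R}^p\to\ell^{\infty}(\Theta)$ defined by $T(\bm\beta')=g(\cdot;\bm\beta')$, and then to invoke the directional functional delta method \citep[Fact 3.2]{westerhout2024asymptotic}. Since Assumption \ref{assu:asymptotic=000020beta} supplies $\sqrt{n}(\hat{\bm\beta}_n-\bm\beta_0)\rightsquigarrow\bm Z$, the desired conclusion follows once $T$ is shown to be Hadamard directionally differentiable at $\bm\beta_0$ with derivative $\bm\eta\mapsto\bm\eta^{\top}\nabla_2 g(\cdot;\bm\beta_0)\in\ell^{\infty}(\Theta)$.

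First, I would note that for each fixed $\bm\beta\in\Theta$, $g(\bm\beta;\bm\beta')$ is a smooth polynomial in the entries of $\mathbf{D}(\bm\beta')\bm\beta$ whose coefficients are moments of $(\bm X,\epsilon)$, so under Assumptions \ref{assu:=000020thresholds} and \ref{assu:fourth=000020moments} one obtains a local Lipschitz bound $\|g(\cdot;\bm\beta)-g(\cdot;\bm\beta')\|_\infty\le C\|\bm\beta-\bm\beta'\|$ on any compact neighborhood of $\bm\beta_0$, with the supremum over $\bm\beta\in\Theta$ absorbed by the extreme value theorem. By \citet[Prop. 5.9]{penot2016analysis}, this local Lipschitz property forces the Hadamard and Gateaux directional derivatives of $T$ to coincide, so it suffices to verify a Gateaux formula. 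Explicit differentiation yields
\[
\nabla_2 g(\bm\beta;\bm\beta')=2\bigl[\mathbb{E}Y-(\mathbb{E}\bm X)^{\top}\mathbf{D}(\bm\beta')\bm\beta\bigr]\nabla\bm t(\bm\beta'),
\]
which is jointly continuous in $(\bm\beta,\bm\beta')\in\Theta\times\Theta$ by Assumption \ref{assu:=000020thresholds}.

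To upgrade pointwise Gateaux differentiability in $\bm\beta'$ to the uniform (Hadamard) version in $\ell^{\infty}(\Theta)$, I would apply a second-order Taylor expansion to $s\mapsto g(\bm\beta;\bm\beta_0+s\bm\eta)$, producing a remainder of the form $s\,\bm\eta^{\top}\mathbf{H}(\bm\beta;\bar{\bm\beta})\bm\eta$ for a mean-value point $\bar{\bm\beta}$, where the Hessian $\mathbf{H}$ in $\bm\beta'$ is jointly continuous in $(\bm\beta,\bar{\bm\beta})\in\Theta\times\Theta$. Taking the supremum over $\bm\beta\in\Theta$ and appealing to the extreme value theorem,
\[
\Bigl\|\tfrac{g(\cdot;\bm\beta_0+s\bm\eta)-g(\cdot;\bm\beta_0)}{s}-\bm\eta^{\top}\nabla_2 g(\cdot;\bm\beta_0)\Bigr\|_{\infty}\le s\sup_{\bar{\bm\beta}\in\Theta}\|\bm\eta^{\top}\mathbf{H}(\cdot;\bar{\bm\beta})\bm\eta\|_{\infty},
\]
and the right-hand side vanishes as $s\searrow 0$. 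This is exactly Hadamard directional differentiability of $T$ at $\bm\beta_0$ in $\ell^{\infty}(\Theta)$, and the directional functional delta method then delivers $\sqrt{n}\{T(\hat{\bm\beta}_n)-T(\bm\beta_0)\}\rightsquigarrow\bm Z^{\top}\nabla_2 g(\cdot;\bm\beta_0)$, which is the claim. The main obstacle is precisely this uniform upgrade: pointwise differentiation in $\bm\beta'$ is classical, but Hadamard differentiability in supremum norm requires joint continuity of $\nabla_2 g$ and the Hessian on $\Theta\times\Theta$, for which the compactness of $\Theta$, the $C^2$ smoothness of $\bm t$ from Assumption \ref{assu:=000020thresholds}, and the finite fourth moments from Assumption \ref{assu:fourth=000020moments} all combine.
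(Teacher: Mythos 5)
Your proposal is correct and follows essentially the same route as the paper's own argument: viewing $\bm\beta'\mapsto g(\cdot;\bm\beta')$ as a map into $\ell^{\infty}(\Theta)$, using the local Lipschitz property and \citet[Prop. 5.9]{penot2016analysis} to reduce Hadamard to Gateaux differentiability, controlling the Taylor remainder uniformly via the Hessian and the extreme value theorem, and concluding with the directional functional delta method of \citet[Fact 3.2]{westerhout2024asymptotic} together with Assumption \ref{assu:asymptotic=000020beta}.
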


\section{Proof of Theorem 4}\label{sec:Proof=000020of=000020theorem=0000204}
We first prove that $\mathbb{P}(\bar{\bm{\beta}}_{n,\mathrm I} = \bm{0}) \to 1$, as $n\to\infty$. By the definition of $\bar{\bm{\beta}}_{n}$ and the union bound,
\begin{align*}
    \mathbb{P}(\bar{\bm{\beta}}_{n,\mathrm I} \ne \bm{0}) &= \mathbb{P}\left(\bigcup_{j \in {\cal S}_{0}}\left\{ \mathbf{1}\left(t_{\hat K_n}\left(\hat{\beta}_{n,j}\right)=0\right)\ne 0\right\} \right)\\
    &=\mathbb{P}\left(\bigcup_{j \in {\cal S}_{0}}\left\{|\hat{\beta}_{n,j}| > \delta_{\hat K_n}\right\} \right)\\
    &\leq \sum_{j \in {\cal S}_{0}}\mathbb{P}\left(|\hat{\beta}_{n,j}| > \delta_{\hat K_n} \right).
\end{align*}
Then for each $j \in {\cal S}_0$, by the consistency of $\hat K_n$ and $\hat {\bm\beta}_n$, we have that
\begin{align*}
 \mathbb{P}\left(|\hat{\beta}_{n,j}| > \delta_{\hat K_n} \right) &= \sum_{k \ne k_0} \left\{\mathbb{P}\left(|\hat{\beta}_{n,j}| > \delta_{k}, \hat K_n = k \right)\right\} + \mathbb{P}\left(|\hat{\beta}_{n,j}| > \delta_{k_0}, \hat K_n = k_0 \right)\\
 &\leq \sum_{k \ne k_0} \left\{\mathbb{P}\left( \hat K_n = k \right)\right\} + \mathbb{P}\left(|\hat{\beta}_{n,j}| > \delta_{k_0} \right)
 \longrightarrow 0.
\end{align*}
As $\left|{\cal S}_{0}\right|<\infty$, this completes the proof regarding the consistency on the irrelevant set. For the relevant set, we write \[\sqrt{n}(\bar{\bm\beta}_{n,\mathrm R} - \bm\beta_{0,\mathrm R}) = \sqrt{n}(\bar{\bm\beta}_{n,\mathrm R} - \hat{\bm\beta}_{n,\mathrm R}) + \sqrt{n}(\hat{\bm\beta}_{n,\mathrm R} - \bm\beta_{0,\mathrm R}).\]
The second term on the RHS converges weakly to $\bm{Z}_{\mathrm R}$ by Assumption \ref{assu:asymptotic=000020beta}, and so, by Slutsky's theorem, it suffices to show that
$$\lim_{n\to\infty}\mathbb{P}\left(\sqrt{n}(\bar{\bm\beta}_{n,\mathrm R} - \hat{\bm\beta}_{n,\mathrm R}) = \bm{0}\right) = 1.$$ Similarly to the irrelevant case, the definition of $\bar{\bm\beta}_n$ and the union bound yields
\begin{align*}
    \mathbb{P}\left(\sqrt{n}(\bar{\bm\beta}_{n,\mathrm R} - \hat{\bm\beta}_{n,\mathrm R}) \ne \bm{0}\right) &= \mathbb{P}\left(\bigcup_{j \in {\cal S}_{0}^{\text{c}}}\left\{|\hat{\beta}_{n,j}| \leq \delta_{\hat K_n}\right\} \right)\\
    &\leq \sum_{j \in {\cal S}_{0}^{\text{c}}}\mathbb{P}\left(|\hat{\beta}_{n,j}| \leq \delta_{\hat K_n} \right).
\end{align*}
For each $j \in {\cal S}_0^{\text{c}}$, an identical argument to the irrelevant case yields$$\lim_{n\to\infty}\mathbb{P}\left(|\hat{\beta}_{n,j}| \leq \delta_{\hat K_n} \right) = 0,$$ which completes the proof.

\bibliographystyle{plainnat}
\bibliography{biblio}

\end{document}